\documentclass[11pt,a4paper]{article}

\usepackage[utf8]{inputenc}
\usepackage[T1]{fontenc}
\usepackage{lmodern}
\usepackage{microtype}
\usepackage{amsmath,amssymb,amsthm,mathtools}
\usepackage{mathrsfs}
\usepackage{enumitem}
\usepackage{geometry}
\usepackage{hyperref}
\usepackage{cleveref}
\usepackage{xcolor}
\usepackage{tikz}
\usetikzlibrary{arrows.meta,calc,decorations.markings}

\newtheorem{theorem}{Theorem}[section]
\newtheorem{proposition}[theorem]{Proposition}
\newtheorem{lemma}[theorem]{Lemma}
\newtheorem{corollary}[theorem]{Corollary}

\theoremstyle{definition}
\newtheorem{definition}[theorem]{Definition}
\newtheorem{example}[theorem]{Example}

\theoremstyle{remark}
\newtheorem{remark}[theorem]{Remark}

\newtheorem*{theoremA}{Theorem A}

\newcommand{\C}{\mathbb{C}}
\newcommand{\R}{\mathbb{R}}
\newcommand{\Z}{\mathbb{Z}}

\newcommand{\OO}{\mathcal{O}}
\newcommand{\Crit}{\operatorname{Crit}}

\newcommand{\Inv}{\operatorname{Inv}}
\newcommand{\re}{\operatorname{Re}}
\newcommand{\im}{\operatorname{Im}}
\newcommand{\ind}{\operatorname{ind}}
\newcommand{\rank}{\operatorname{rank}}
\newcommand{\wtH}{\widetilde H}
\newcommand{\CH}{CH}
\newcommand{\df}{\partial f}

\title{\bfseries
The Milnor Fiber as a Local Conley Index
}

\author{
Jean-Paul Brasselet
\and
Nivaldo Grulha
}

\date{}

\begin{document}

\maketitle

\begin{abstract}
Let $f:(\C^n,0)\to(\C,0)$ be a holomorphic germ with an isolated
critical point at the origin, and let $\varphi_f$ be the negative
gradient flow of $\re f$ with respect to a Riemannian metric.  We show
that the origin is an isolated invariant set of $\varphi_f$ and that its
Conley index is the suspension of the Milnor fiber,
\[
h(\varphi_f,\{0\})\;\simeq\;\Sigma F_f\;\simeq\;\bigvee_{\mu(f)}S^n ,
\]
so that $\mu(f)=\rank \CH_n(\varphi_f,\{0\};\Z)$: the Milnor number is
the rank of the homological Conley index of the flow in degree $n$.  The proof
uses continuation to a holomorphic morsification of $f$.  Since $\im f$
is a first integral of the flow, after a rotation of the target through
an angle avoiding finitely many values there are no connecting orbits
between the $\mu(f)$ nondegenerate rest points that result, and the
sum property of the Conley index applies.  McCord's theorem then
recovers the Poincar\'e--Hopf index of $-\nabla\re f$.
\end{abstract}

\section{Introduction and main results}

Let
\[
f:(\C^n,0)\longrightarrow(\C,0)
\]
be a germ of a holomorphic function with an isolated critical point at
the origin.  Milnor's fibration theorem \cite{Milnor1968} associates
with $f$ the Milnor fiber
\[
F_f=f^{-1}(\delta)\cap B_\varepsilon,
\qquad 0<|\delta|\ll\varepsilon\ll 1,
\]
which, in the isolated case, has the homotopy type of a bouquet of
middle-dimensional spheres,
\begin{equation}\label{eq:bouquet}
F_f\simeq\bigvee_{\mu(f)}S^{\,n-1},
\end{equation}
where $\mu(f)$ is the Milnor number.

We write $z=x+iy$ with $x,y\in\R^n$, identify $\C^n\cong\R^{2n}$, and write $f=\re f+i\,\im f$, where $\re f=\operatorname{Re}f$ and $\im f=\operatorname{Im}f$ are real-valued functions of $(x,y)$.  On the dynamical side, the real part
\[
g=\re f:\R^{2n}\longrightarrow\R
\]
has an isolated critical point at the origin, and its negative
gradient flow $\varphi_f$ (with respect to the standard Hermitian
metric) is a local gradient dynamical system.  Conley index
theory \cite{Conley1978,Salamon1985} attaches to every isolated
invariant set of a flow a pointed homotopy type, invariant under
continuation; we determine this invariant for the equilibrium of
$\varphi_f$.

\begin{theoremA}
Let $f:(\C^n,0)\to(\C,0)$, $n\ge 1$, be a holomorphic germ with an
isolated critical point at the origin, and let $\varphi_f$ be the
local negative gradient flow of $\re f$ with respect to a Riemannian
metric on a neighborhood of $0$.  Then $\{0\}$ is an
isolated invariant set of $\varphi_f$, its Conley index does not
depend on the choice of metric, and
\[
h(\varphi_f,\{0\})
\;\simeq\;
\Sigma F_f
\;\simeq\;
\bigvee_{\mu(f)}S^{\,n}.
\]
Consequently,
\[
\CH_k(\varphi_f,\{0\};\Z)\;\cong\;
\begin{cases}
\Z^{\mu(f)}, & k=n,\\[2pt]
0, & k\neq n,
\end{cases}
\qquad\text{and}\qquad
\boxed{\;\mu(f)=\rank \CH_n(\varphi_f,\{0\};\Z).\;}
\]
\end{theoremA}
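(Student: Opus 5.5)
Our plan is to deform $f$ to a holomorphic morsification, compute the Conley index of the deformed flow from nondegenerate rest points via the sum property, and return to $f$ by continuation. First, $\{0\}$ is an isolated invariant set. Along a trajectory of $\varphi_f$ we have $\frac{d}{dt}\re f = -|\nabla \re f|^2\le 0$, with equality only at rest points. By the Cauchy--Riemann equations, $\nabla\re f$ vanishes exactly where $\partial f$ vanishes, so $0$ is the only rest point in a small closed ball $B$. A full orbit contained in $B$ has $\re f$ monotone and bounded along it, so its $\alpha$- and $\omega$-limit sets consist of rest points, hence both equal $\{0\}$. Then $\re f$ takes the value $0$ at both ends while being monotone, so it is constant along the orbit, which forces the orbit to be $\{0\}$. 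Any two Riemannian metrics on a neighborhood of $0$ are joined by the linear path $g_s$. Along this path the gradient flows keep the same rest points and the same Lyapunov function $\re f$, so $B$ stays an isolating neighborhood for every $s$. Continuation then shows that $h(\varphi_f,\{0\})$ does not depend on the metric, and we may use the standard Hermitian one.

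Next, choose a linear form $\ell$ such that $f_t=f+t\ell$ is a morsification. For small $t\neq 0$ the map $f_t$ has exactly $\mu(f)$ nondegenerate critical points $p_1,\dots,p_\mu$ in $B$, and the flows of $-\nabla\re f_t$ form a continuation family with $B$ isolating throughout. The argument above applies verbatim to $f_t$, since its only rest points are the $p_j$ and $\re f_t$ is a Lyapunov function. Hence
\[
h(\varphi_f,\{0\})\simeq h(\varphi_{f_t},\Inv(B)).
\]
We now replace $f_t$ by $e^{i\theta}f_t$; this is again a continuation, through the path $e^{is\theta}$. For the Hermitian metric, $\im f_t$ is a first integral of $-\nabla\re f_t$, because $\nabla\im f_t$ equals $\nabla\re f_t$ rotated by $i$, so the two gradients are orthogonal. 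A connecting orbit between $p_j$ and $p_k$ would therefore require $\im(e^{i\theta}f_t(p_j))=\im(e^{i\theta}f_t(p_k))$. It would also require $\re(e^{i\theta}f_t(p_j))\neq \re(e^{i\theta}f_t(p_k))$, since $\re$ strictly decreases along a nonconstant orbit. Excluding the finitely many angles $\theta$ for which two distinct critical values align in this way, there are no connecting orbits. If two critical values coincide, a connection between them is already impossible because $\re$ would have to strictly drop. So $\Inv(B)=\{p_1,\dots,p_\mu\}$ is a disjoint union of isolated invariant sets, and the sum property gives
\[
h(\varphi_{f_t},\Inv(B))\simeq\bigvee_j h(\{p_j\}).
\]

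At each $p_j$ the real part of a nondegenerate holomorphic quadratic form has Morse index $n$, by the holomorphic Morse lemma: $\re\sum z_k^2=\sum(x_k^2-y_k^2)$. Its Conley index is therefore $S^n$, and we obtain $\bigvee_{\mu}S^n$, which yields the stated homology and $\mu(f)=\rank\CH_n$. To identify this space with $\Sigma F_f$, we use the bouquet theorem \eqref{eq:bouquet} together with $\Sigma\bigvee S^{n-1}\simeq\bigvee S^n$. As a more intrinsic check, we would exhibit an index pair $(B\cap\{\re f\le c\}, B\cap\{\re f\le -c\})$ with $\re f$ replaced by a suitable rotation. The quotient of this pair contracts onto the suspension of the fiber over $-c$, via the Milnor fibration over a half-disk.

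The main obstacle is the uniform isolation: $B$ must stay isolating along the whole continuation in $t$ and $\theta$. One has to show that no orbit through $\partial B$ remains in $B$ for all time when $t$ is small. We would handle this by choosing $B$ as a Milnor ball on which $|\nabla\re f|$ is bounded below on $\partial B$ by the Łojasiewicz inequality, and then applying the Lyapunov argument with a time bound that is uniform in the parameters.
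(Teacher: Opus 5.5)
Your outline follows the paper's proof step for step: Lyapunov isolation in $\overline B_\varepsilon$, metric independence by convex interpolation and continuation, a linear morsification, a rotation chosen so that the first integral $\im$ forbids connections, the sum property, Morse index $n$ from the holomorphic Morse lemma, and the bouquet theorem. Rotating after morsifying rather than before makes no difference. The one step not yet proved is the one you flag at the end: that $B$ stays isolating for $-\nabla\re(e^{is\theta}f_t)$ for all $s\in[0,1]$ and small $|t|$. Your claim that ``the argument above applies verbatim to $f_t$'' does not give this. With $\mu$ rest points that argument only shows that a complete orbit in $B$ is a rest point or a connection $p_i\to p_j$, and nothing in it keeps such a connection off $\partial B$. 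Your proposed remedy does not close the gap as stated either. A positive lower bound for $|\nabla\re f|$ on $\partial B$ follows from compactness alone, with no \L{}ojasiewicz inequality involved. And a uniform time bound does not by itself stop an orbit from touching $\partial B$ and coming back.

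The paper settles this with openness of isolation (\Cref{prop:openness}). By \Cref{lem:isolation}, $\overline B_\varepsilon$ is isolating for every $e^{-i\theta}f$. The set of $(\theta,t)$ for which it remains isolating is open, and compactness of $[0,2\pi]\times\{0\}$ gives a single $t_0$ for all $\theta$. If you prefer a direct argument, the missing ingredient is that the critical values tend to $0$. Put $G=\re(e^{-i\theta}f_t)$. For the standard metric, $|\nabla G|=|\partial f_t|\ge c>0$ on the annulus $\varepsilon/2\le|z|\le\varepsilon$ for $|t|\le t_0$, uniformly in $\theta$. Since $|\dot z|=|\nabla G|$ and $\tfrac{d}{ds}G=-|\nabla G|^2$, each crossing of the annulus lowers $G$ by at least $c\varepsilon/2$. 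A complete orbit in $B$ meeting $\partial B$ runs between two critical points in $B_{\varepsilon/2}$, so it crosses twice and loses at least $c\varepsilon$. But its total loss is $\re\bigl(e^{-i\theta}(w_i-w_j)\bigr)\le|w_i-w_j|$, which tends to $0$ as $t\to0$.

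Finally, drop the ``intrinsic check''. The pair $(B\cap\{\re f\le c\},B\cap\{\re f\le -c\})$ is not an index pair. Already for $f=z^2$ one has $\tfrac{d}{ds}|z|^2=-4\re f$, so orbits leave $B$ through $\partial B\cap\{-c<\re f<0\}$ without meeting the exit set. The proof does not need it.
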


The suspension arises from the
Cauchy--Riemann equations: every Morse point of the real part of a
holomorphic function has Morse index exactly $n$, so the vanishing
cycles, which live in degree $n-1$, are seen by the dynamics with a
shift of one degree.

\begin{figure}[htb]
\centering
\begin{tikzpicture}[scale=0.8,transform shape,>=Stealth,
 ex/.style={red!70!black,line width=2pt}]
\node at (0,1.9) {\small Milnor fiber $F_f$};
\node at (4.4,1.9) {\small suspension $\Sigma F_f$};
\node at (8.8,1.9) {\small Conley index: $D^n_u/\partial D^n_u$};
\node[rotate=90] at (-2.1,0) {\small $n=1$, $f=z^2$};
\node[rotate=90] at (-2.1,-4.2) {\small $n=2$, $f=z_1^2+z_2^2$};
\draw[thin,gray] (-1.2,0)--(1.2,0);
\fill[blue!60!black] (-0.8,0) circle (2.4pt); \fill[blue!60!black] (0.8,0) circle (2.4pt);
\node[below] at (-0.8,-0.05) {\scriptsize $-1$}; \node[below] at (0.8,-0.05) {\scriptsize $1$};
\node at (0,-1.3) {\small $\simeq S^0$};
\draw[->,thick] (1.6,0)--(2.8,0) node[midway,above] {\scriptsize $\Sigma$};
\draw (4.4,0) circle (0.8);
\fill (4.4,0.8) circle (1.8pt); \fill (4.4,-0.8) circle (1.8pt);
\fill[blue!60!black] (3.6,0) circle (2.4pt); \fill[blue!60!black] (5.2,0) circle (2.4pt);
\node at (4.4,-1.3) {\small $\simeq S^1$};
\node at (6.6,0) {$\simeq$};
\draw[thick] (8.8,-0.8)--(8.8,0.8);
\fill[red!70!black] (8.8,0.8) circle (3pt); \fill[red!70!black] (8.8,-0.8) circle (3pt);
\node[right] at (8.85,0) {\scriptsize $D^1_u$};
\node at (8.8,-1.3) {\small $D^1_u/\partial D^1_u$};
\begin{scope}[yshift=-4.2cm]
 \draw (-0.8,0.8)--(-0.8,-0.8); \draw (0.8,0.8)--(0.8,-0.8);
 \draw (0,0.8) ellipse (0.8 and 0.25);
 \draw (-0.8,-0.8) arc (180:360:0.8 and 0.25);
 \draw[dashed] (-0.8,-0.8) arc (180:0:0.8 and 0.25);
 \draw[red!70!black,thick] (-0.8,0) arc (180:360:0.8 and 0.25);
 \draw[red!70!black,thick,dashed] (-0.8,0) arc (180:0:0.8 and 0.25);
 \node at (0,-1.3) {\small $\simeq S^1$ (vanishing cycle)};
 \draw[->,thick] (1.6,0)--(2.8,0) node[midway,above] {\scriptsize $\Sigma$};
 \draw (4.4,0) circle (0.9);
 \draw (3.5,0) arc (180:360:0.9 and 0.25);
 \draw[dashed] (3.5,0) arc (180:0:0.9 and 0.25);
 \node at (4.4,-1.3) {\small $\simeq S^2$};
 \node at (6.6,0) {$\simeq$};
 \fill[blue!6] (8.8,0) circle (0.8);
 \draw[red!70!black,line width=2pt] (8.8,0) circle (0.8);
 \node at (8.8,0) {\scriptsize $D^2_u$};
 \node at (8.8,-1.3) {\small $D^2_u/\partial D^2_u$};
\end{scope}
\end{tikzpicture}
\caption{Theorem~A for $f=z_1^2+\dots+z_n^2$, $n=1,2$: $F_f$, $\Sigma F_f$, and $N_1/N_2\simeq D^n_u/\partial D^n_u$ for the block $D^n_s\times D^n_u$ (exit set $D^n_s\times\partial D^n_u$).}
\label{fig:fiber-index}
\end{figure}

For the standard Hermitian metric one has, along the flow lines,
\begin{equation}\label{eq:fdot}
\frac{d}{dt}\,f(z(t))=-\bigl\|\df(z(t))\bigr\|^{2}\in\R_{\le 0},
\end{equation}
so $\im f$ is a first integral and $\re f$ is a strict Lyapunov
function, which isolates the equilibrium.  A holomorphic morsification
of $f$ deforms the flow into one with $\mu(f)$ nondegenerate rest
points, all of index $n$.  A connecting orbit between two of them,
$p_i,p_j$, lies on a horizontal line in the $f$-plane, which forces
$f(p_i)-f(p_j)$ to be a positive real number.  A rotation
$f\mapsto e^{-i\theta}f$ of the target through an angle avoiding
finitely many values excludes this for every pair, so the invariant set
of the morsified flow is a disjoint union of $\mu(f)$ hyperbolic rest
points, and the sum property of the Conley index gives the wedge.  The
case $n=1$ is included: $F_f$ is a finite set of points and $\Sigma F_f$
is a wedge of circles.

\subsection*{Relation with previous work}

For gradient flows, the homological Conley index of an isolated critical point
is given by the critical groups of Gromoll--Meyer
\cite{GromollMeyer1969}: in Chang's formulation
\cite[Def.~4.1, Def.~5.1 and Thm.~5.2]{Chang1993} the critical groups
are the homology of a Gromoll--Meyer pair, a neighbourhood with the
mean value property and its exit set, which are the requirements
defining an index pair for the negative gradient flow.  The homotopy type of the quotient of such a pair by its exit set is what Goresky--MacPherson call the Conley Morse index: for a function with a critical point they define Morse data $(A,B)$ through sublevel sets, with no reference to a flow, and observe that the homotopy type of $A/B$ is well defined, after Conley \cite{Conley1972} \cite[Part~I, \S3.3]{GoreskyMacPherson1988}.  On a manifold with a single stratum, as here, their local Morse data \cite[\S3.5.2, Thm.~3.5.4]{GoreskyMacPherson1988} is Morse data in this sense, and the normal Morse data \cite[\S3.6]{GoreskyMacPherson1988} of the normal slice is trivial.  In the holomorphic setting Theorem~A determines the Conley
index of the gradient flow of $\re f$ as $\Sigma F_f$, for every
$n\ge1$ and every Riemannian metric, and realizes it, by continuation
to a holomorphic morsification, as the wedge of the $\mu(f)$ hyperbolic
rest points of index $n$; the shift of degree between $F_f$ and the
index is explained by \Cref{lem:index-n}.  For homogeneous potentials,
Conley's computation of the index through the exit set
$\{V\ge0\}\cap S$ of the unit ball
\cite[Ch.~III, \S6.2; Ch.~IV, \S1.5A]{Conley1978} gives the
corresponding special case (\Cref{prop:homog}).

Since the Conley index determines the Poincar\'e--Hopf index
\cite{McCord1989,RazvanFotouhi2008}, Theorem~A also yields that of
$-\nabla\re f$ (\Cref{sec:homology}).

\section{Background: Milnor fibrations and the Conley index}
\label{sec:background}

\subsection{Milnor fibrations and morsifications}
\label{sec:milnor}

Throughout, $f:(\C^n,0)\to(\C,0)$ is holomorphic with an isolated
critical point at $0$.  We write $z=(z_1,\dots,z_n)$ with $z_j=x_j+iy_j$, identify $\C^n$ with $\R^{2n}$ through $(x,y)$, and decompose $f(z)=\re f(z)+i\,\im f(z)$ into its real and imaginary parts $\re f=\operatorname{Re}f$ and $\im f=\operatorname{Im}f$, which are real-valued functions on $\R^{2n}$.  We write
\[
\df=\Bigl(\frac{\partial f}{\partial z_1},\dots,
\frac{\partial f}{\partial z_n}\Bigr)
\]
for the holomorphic gradient, and
$\|\df\|^2=\sum_j|\partial f/\partial z_j|^2$.
Suspensions are taken in the pointed category: $\Sigma$ denotes the
reduced suspension of a pointed CW complex.  For (well-)pointed
CW complexes the quotient map from the unreduced to the reduced
suspension collapses the contractible subcomplex $\{x_0\}\times I$ and
is a homotopy equivalence, so reduced and unreduced suspensions agree here.  For $n=1$,
$F_f$ is a set of $\mu(f)+1$ points and either suspension is a wedge
of $\mu(f)$ circles.

\begin{theorem}[Milnor \cite{Milnor1968}]
\label{thm:milnor-fibration}
For sufficiently small $\varepsilon>0$ and $0<\delta\ll\varepsilon$,
the restriction
\[
f:
B_\varepsilon\cap f^{-1}\bigl(\overline D_\delta\setminus\{0\}\bigr)
\longrightarrow
\overline D_\delta\setminus\{0\}
\]
is a smooth locally trivial fibration, the \emph{Milnor fibration in
the tube}, and its fiber $F_f=f^{-1}(\delta)\cap B_\varepsilon$ is
the Milnor fiber.  Moreover every fiber $f^{-1}(w)$,
$|w|\le\delta$, meets $\partial B_\varepsilon$ transversally.
\end{theorem}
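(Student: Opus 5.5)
The statement is Milnor's fibration theorem in the tube. I will prove it with a transversality-plus-Ehresmann argument. The plan has three steps: first establish the boundary transversality uniformly for small $|w|$, then show that $f$ is a submersion on the relevant region, and finally apply Ehresmann's fibration theorem for proper submersions of manifolds with boundary.

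\textbf{Step 1 (transversality at $w=0$).} Since the critical point is isolated, $f^{-1}(0)\setminus\{0\}$ is smooth near $0$. I claim that the spheres $S_r=\partial B_r$ are transverse to $f^{-1}(0)$ for all small $r>0$. Suppose not. Then $\|z\|^2$ restricted to $f^{-1}(0)\setminus\{0\}$ has critical points accumulating at $0$. At such a point the real gradient $z$ is a complex multiple of $\overline{\df(z)}$, so $z=\lambda\,\overline{\df(z)}$. The set of such points is real semi-analytic. By the curve selection lemma there is a real-analytic arc $\gamma(t)$ with $\gamma(0)=0$ and $\gamma(t)\in f^{-1}(0)\setminus\{0\}$ satisfying this proportionality condition for $t>0$. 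Along the arc, $f\circ\gamma\equiv 0$, so
\[
0=\tfrac{d}{dt}f(\gamma)=\langle \gamma',\overline{\df}\rangle_{\C}.
\]
Hence $\tfrac{d}{dt}\|\gamma\|^2=2\re\langle\gamma',\gamma\rangle=2\re(\bar\lambda\cdot 0)=0$. This contradicts $\gamma(t)\neq 0$ and $\gamma(0)=0$. Fix such an $\varepsilon$.

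\textbf{Step 2 (transversality for small $|w|$ and the submersion property).} Transversality is an open condition, and $S_\varepsilon$ is compact. Together with Step 1, this gives $\delta>0$ such that every fiber $f^{-1}(w)$ with $|w|\le\delta$ meets $S_\varepsilon$ transversally. I will argue this by contradiction and compactness. Take sequences $z_k\in S_\varepsilon$ with $f(z_k)\to 0$ at which transversality fails, meaning $z_k$ is a complex multiple of $\overline{\df(z_k)}$ or $\df(z_k)=0$. Pass to a limit $z\in S_\varepsilon\cap f^{-1}(0)$. The proportionality condition is closed, because $\df(z)\neq 0$ bounds the multipliers. So the limit point is non-transverse, contradicting Step 1. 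Now shrink $\delta$ further if necessary so that $0$ is the only critical value of $f$ on $\overline B_\varepsilon$ with modulus at most $\delta$. This is possible because the critical set in $\overline B_\varepsilon$ is $\{0\}$ once $\varepsilon$ is small. As a result, $f$ has no critical points on $E=\overline B_\varepsilon\cap f^{-1}(\overline D_\delta\setminus\{0\})$, and its restriction to $\partial E\cap S_\varepsilon$ is also a submersion onto the punctured disk.

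\textbf{Step 3 (Ehresmann).} The map $f:E\to\overline D_\delta\setminus\{0\}$ is proper, since $\overline B_\varepsilon$ is compact and preimages of compact sets in the punctured disk are closed. It is a submersion both on the interior and on the boundary stratum. Ehresmann's theorem for manifolds with boundary then gives local triviality. Concretely, I will construct local lifts of the coordinate vector fields on the base that are tangent to $S_\varepsilon$ along the boundary; Step 2 makes this possible, and the lifts are glued with a partition of unity. Integrating these lifts produces the trivializations. Smoothness is automatic from this construction.

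The main obstacle is Step 1, namely the curve selection argument. The key point there is the complex-linear computation showing that $\|\gamma\|^2$ is constant along the selected arc.
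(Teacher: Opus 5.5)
The paper gives no proof of this statement; it is quoted from Milnor. Your argument is correct and is essentially the standard proof behind that citation: curve selection gives transversality of small spheres to $f^{-1}(0)\setminus\{0\}$, compactness of $S_\varepsilon$ extends it to the fibres $f^{-1}(w)$ with $|w|\le\delta$, and Ehresmann's theorem for proper submersions of manifolds with boundary gives local triviality (to avoid the corners of $E$ along $|f|=\delta$, run that last step over the open punctured disc $D_{\delta_0}\setminus\{0\}$, $\delta<\delta_0$, and then restrict).
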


\begin{figure}[htb]
\centering
\begin{tikzpicture}[>=Stealth, scale=0.85]
\coordinate (C) at (0,0);
\draw[thick] (C) circle (2.0);
\node at (0,2.35) {$\mathbb S^{2n-1}_\varepsilon$};

\foreach \a in {35,145,250}{
  \filldraw[black] ($(C)+(\a:2.0)$) circle (1.6pt);
}
\node[align=center] at (-1.65,-1.55)
  {\scriptsize $K_f=f^{-1}(0)\cap\mathbb S^{2n-1}_\varepsilon$};

\draw[blue!60!black,thick]
  (0.9,1.55) .. controls (1.5,0.4) and (1.5,-0.4) .. (0.9,-1.55);
\node[blue!60!black] at (1.85,0) {\scriptsize $F_\theta$};

\draw[->,thick] (2.6,0) -- (4.1,0);
\node[above] at (3.35,0.05) {$f/|f|$};

\coordinate (T) at (5.6,0);
\draw[thick] (T) circle (1.1);
\node at (5.6,1.45) {$\mathbb S^1$};
\filldraw[blue!60!black] ($(T)+(0:1.1)$) circle (1.8pt);
\node[right,blue!60!black] at ($(T)+(0:1.1)$) {\scriptsize $e^{i\theta}$};

\draw[dashed,blue!40!black] (0.9,1.55) .. controls (3.2,2.3) .. ($(T)+(0:1.1)$);
\draw[dashed,blue!40!black] (0.9,-1.55) .. controls (3.2,-2.3) .. ($(T)+(0:1.1)$);
\end{tikzpicture}
\caption{The Milnor fibration: $f/|f|$ fibers $\mathbb S^{2n-1}_\varepsilon\setminus K_f$ over $\mathbb S^1$ with fiber $F_f$.}
\label{fig:fibration}
\end{figure}

\begin{theorem}[{\cite{Milnor1968}; see also
\cite{ArnoldGuseinZadeVarchenko1988}}]
\label{thm:wedge}
If $0$ is an isolated critical point, then
$F_f\simeq\bigvee_{\mu(f)}S^{\,n-1}$, where
\[
\mu(f)=\dim_\C\OO_{\C^n,0}\Big/
\Bigl(\tfrac{\partial f}{\partial z_1},\dots,
\tfrac{\partial f}{\partial z_n}\Bigr).
\]
For $n=1$ the statement reads: $F_f$ is a set of $\mu(f)+1$ points.
\end{theorem}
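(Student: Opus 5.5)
The statement to prove is \Cref{thm:wedge}: the Milnor fiber of an isolated singularity is a bouquet of $\mu(f)$ spheres of dimension $n-1$, and $\mu(f)$ equals the algebraic multiplicity. I will follow Milnor's route and treat $n=1$ separately.

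\emph{The case $n=1$.} Write $f=u\,z^{m}$ with $u(0)\neq0$ and $m=\mu(f)+1$. After a holomorphic change of coordinate $w=z\,u^{1/m}$ the germ becomes $w^m$. Then $F_f=\{w^m=\delta\}\cap B_\varepsilon$ consists of exactly $m$ points. The algebraic side is also direct: the Jacobian ideal is $(z^{m-1})$, so the quotient $\OO_{\C,0}/(f')$ has dimension $m-1$.

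\emph{The case $n\ge2$, homotopy type.} This is done in three steps.
\begin{enumerate}
\item Show that $F_f$ is $(n-2)$-connected. Consider the fibration $\mathbb S^{2n-1}_\varepsilon\setminus K_f\to\mathbb S^1$ of Figure~\ref{fig:fibration}. Milnor shows, using the curve selection lemma, that its fiber is parallelizable. He also shows, via Morse theory of $|f|^2$ on the fiber, that the fiber has the homotopy type of a CW complex of dimension $\le n-1$. Since $K_f$ is $(n-3)$-connected of codimension $2$, the complement is $(n-2)$-connected, and the homotopy sequence of the fibration then gives $\pi_k(F_f)=0$ for $k\le n-2$.
\item Invoke Whitehead. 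An $(n-2)$-connected CW complex of dimension $n-1$ with free $H_{n-1}$ is a bouquet of $(n-1)$-spheres. For $n=2$ this is instead a connected graph, hence again a bouquet of circles.
\item Relate the fiber in the tube to the fiber on the sphere. The transversality in \Cref{thm:milnor-fibration} lets one isotope the tube fiber to the sphere fiber by Ehresmann. Hence the tube fiber is a Stein manifold of complex dimension $n-1$. By Andreotti--Frankel it has the homotopy type of a CW complex of dimension $\le n-1$, and this is the cleaner way to get the dimension bound in step 1.
\end{enumerate}

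\emph{Identifying the number of spheres with $\mu(f)$.} I would take a holomorphic morsification $f_t=f-\langle a,z\rangle$ with $a$ generic and small. It has exactly $\mu(f)$ nondegenerate critical points in $B_\varepsilon$, because $\dim \OO/(\partial f)$ counts the preimages of a regular value $a$ of $\df$ with multiplicity. On the tube $B_\varepsilon\cap f_t^{-1}(D_\delta)$, the Picard--Lefschetz picture applies. This region is contractible, being a small perturbation of the contractible tube over $D_\delta$ for $f$. It is also obtained from a regular fiber by attaching one $n$-cell per critical value, via Lefschetz thimbles along disjoint paths. Therefore $\tilde H_{n-1}(F)\cong\Z^{\mu(f)}$, and the bouquet has exactly $\mu(f)$ spheres.

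\emph{Main obstacle.} The step I expect to be hardest is the connectivity bound for $n\ge2$, specifically the $(n-3)$-connectivity of the link $K_f$ used in step 1. Milnor proves it through careful Morse theory on $\mathbb S^{2n-1}\setminus K_f$ with the function $\log|f|$. Since the paper cites this result rather than reproving it, I would defer this step to \cite{Milnor1968} and \cite{ArnoldGuseinZadeVarchenko1988}.
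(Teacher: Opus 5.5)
The paper does not prove this theorem; it quotes it from \cite{Milnor1968} and \cite{ArnoldGuseinZadeVarchenko1988}. Your sketch therefore has to stand on its own, and it breaks at Step~1, the step meant to supply the connectivity. The problem is not the connectivity of $K_f$, which you flag as the main obstacle. Even granting that $K_f$ is $(n-3)$-connected, the deduction that $\mathbb S^{2n-1}_\varepsilon\setminus K_f$ is $(n-2)$-connected is false for every $n\ge3$. By Alexander duality, $H_1(\mathbb S^{2n-1}_\varepsilon\setminus K_f)\cong H^{2n-3}(K_f)\neq0$, since $K_f$ is a nonempty closed oriented $(2n-3)$-manifold, so its complement is never simply connected. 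General position in codimension two gives only path-connectedness. Had the complement been simply connected, the exact sequence $\pi_1(\mathbb S^{2n-1}_\varepsilon\setminus K_f)\to\pi_1(\mathbb S^1)\to\pi_0(F_f)$ would have given $F_f$ infinitely many components.

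Nor can the fibration sequence be rescued with correct input. It only embeds $\pi_1(F_f)$ into the knot group, so $\pi_1(F_f)=0$ would require knowing that this group is $\Z$, which is as hard as the statement itself. Alexander and Poincar\'e duality do give $\widetilde H_k(\mathbb S^{2n-1}_\varepsilon\setminus K_f)=0$ for $2\le k\le n-2$. Through the Wang sequence, however, this only forces monodromy minus the identity to be surjective or injective on the groups $H_k(F_f)$ in that range; it says nothing about whether they vanish. For $n=2$ the step can be repaired: a meridian of $K_f$ maps with degree one under $f/|f|$, so $F_f$ is connected. For $n\ge3$, Step~2 receives no valid input.

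There are two ways to close the gap.

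\emph{Milnor's route.} Compare $F_\theta$ with the complement of the opposite closed page rather than of the binding. The fibration is trivial over the arc $\mathbb S^1\setminus\{-e^{i\theta}\}$, so $\mathbb S^{2n-1}_\varepsilon\setminus\overline F_{\theta+\pi}\cong F_\theta\times(0,1)$. Here $\overline F_{\theta+\pi}=F_{\theta+\pi}\cup K_f$ is a compact manifold with a handle decomposition by handles of index $\le n-1$, which is the Morse-theoretic content of your Step~3. Alexander duality then gives $\widetilde H_k(F_\theta)\cong H^{2n-2-k}(\overline F_{\theta+\pi})=0$ for $k\le n-2$. To upgrade this to $\pi_k(F_\theta)=0$ for $k\le n-2$, put $(k+1)$-discs in $\mathbb S^{2n-1}$ in general position against an $(n-1)$-dimensional spine of the page, then use the gradient flow that shrinks $\overline F_{\theta+\pi}$ onto that spine.

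\emph{Your own last paragraph.} It already does the job, and you did not use it. If the contractible tube $T$ is obtained from a regular fiber $F$ by attaching $\mu(f)$ cells of dimension $n$, then the pair $(T,F)$ is $(n-1)$-connected. Hence $\pi_k(F)\cong\pi_k(T)=0$ for $k\le n-2$, and $\widetilde H_{n-1}(F)\cong H_n(T,F)\cong\Z^{\mu(f)}$. Together with the dimension bound and Step~2 this is a complete proof, and Step~1 can be dropped entirely, parallelizability and the connectivity of $K_f$ included. The cost is that the thimble decomposition of $T$ and its contractibility must be proved or cited from \cite{ArnoldGuseinZadeVarchenko1988}.

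Two small corrections. The tube fiber is Stein because it is a closed complex submanifold of the open ball, not because it is isotopic to the sphere fiber. And one $n$-cell is attached per critical point, not per critical value.
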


\begin{figure}[htb]
\centering
\begin{tikzpicture}[>=Stealth, scale=0.85]
\coordinate (B) at (0,0);
\foreach \ang in {25,85,145}{
  \draw[thick] ($(B)+(\ang:0.55)$) circle (0.55);
}
\filldraw (B) circle (1.3pt);
\node at (0,-1.4) {$F_f\simeq\bigvee_{\mu(f)} S^{\,n-1}$};

\draw[->,thick] (2.0,0) -- (3.3,0);
\node[above] at (2.65,0.05) {$\Sigma$};

\coordinate (S) at (5.6,0);
\draw[thick] (S) ++(0,1.3) -- ++(-1.1,-1.3) -- ++(1.1,-1.3)
  -- ++(1.1,1.3) -- cycle;
\draw[thick] ($(S)+(-1.1,-1.3)$) .. controls ($(S)+(0,-0.35)$)
  .. ($(S)+(1.1,-1.3)$);
\foreach \ang in {25,85,145}{
  \draw[thick,black!55] ($(S)+(\ang:0.4)$) circle (0.4);
}
\filldraw (S) ++(0,1.3) circle (1.3pt);
\filldraw (S) ++(0,-1.3) circle (1.3pt);
\node at (5.6,-1.9) {$\Sigma F_f\simeq\bigvee_{\mu(f)} S^{\,n}$};
\end{tikzpicture}
\caption{$\Sigma$ turns the wedge of $(n-1)$-spheres $F_f$ into a wedge of $n$-spheres (\Cref{thm:wedge}).}
\label{fig:suspension}
\end{figure}

\begin{definition}
\label{def:morsification}
A \emph{morsification} of $f$ is a holomorphic family
$f_t$, $t\in\C$, $|t|$ small, with $f_0=f$, defined on a fixed
neighborhood of $0$, such that for all sufficiently small $t\neq0$
the function $f_t$ has only nondegenerate (Morse) critical points in
that neighborhood.
\end{definition}

Morsifications exist.  The map $\partial f:(\C^n,0)\to(\C^n,0)$ is finite
of local degree $\mu(f)$
\cite[Ch.~I, \S5.1--5.2]{ArnoldGuseinZadeVarchenko1985}; its critical
locus $\{\det\operatorname{Hess}f=0\}$ is a proper analytic subset, so
the discriminant $\Delta\subset(\C^n,0)$, its image, is a proper
analytic hypersurface germ.  Take $a\in\C^n$ outside the tangent cone of
$\Delta$ at $0$ (a proper algebraic cone, so $a$ is generic; recall that
$a\in C_0\Delta$ means $x_k/\|x_k\|\to a/\|a\|$ for some
$x_k\in\Delta\setminus\{0\}$, $x_k\to0$) and put
$f_t=f-t\sum_ja_jz_j$.  The critical points of $f_t$ are the solutions
of $\partial f(z)=ta$.  If $t_ka\in\Delta$ for a sequence $t_k\to0$ in
$\C$, pass to a subsequence with $t_k/|t_k|\to\lambda\in S^1$; then
$t_ka/\|t_ka\|\to\lambda a/\|a\|$, so $\lambda a\in C_0\Delta$, and
since $C_0\Delta$ is a complex cone, $a\in C_0\Delta$, a
contradiction.  Hence the complex line $\C a$ meets
$\Delta$ only at $0$ near the origin, and $ta\notin\Delta$ for
$0<|t|\ll1$.  Hence all solutions near $0$ are regular points of
$\partial f$, that is, nondegenerate critical points of $f_t$.  The
family $f_t$ is a morsification; see also
\cite{ArnoldGuseinZadeVarchenko1988} and
\cite[Ch.~I, \S8.3]{ArnoldGuseinZadeVarchenko1985} for versal
unfoldings.  The
critical points of a morsification satisfy the following count.

\begin{theorem}[{\cite{ArnoldGuseinZadeVarchenko1988}; see also \cite[Ch.~I, \S5]{ArnoldGuseinZadeVarchenko1985}}]
\label{thm:morsification-count}
Let $f_t$ be a morsification of $f$ and let $\varepsilon>0$ be small
enough that $\Crit(f)\cap \overline B_\varepsilon=\{0\}$.  Then for
all sufficiently small $t\neq0$ the function $f_t$ has exactly
$\mu(f)$ critical points
$p_1(t),\dots,p_{\mu(f)}(t)$ in $B_\varepsilon$, all nondegenerate,
all contained in an arbitrarily small ball around the origin as
$t\to0$, so they lie in the open ball
$B_{\varepsilon/2}$ for $|t|$ small.
\end{theorem}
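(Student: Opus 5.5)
The plan is to compare the degree of the map $\partial f$ with the number of preimages of a regular value near $0$. The count $\mu(f)$ is the local degree of the finite map $\partial f:(\C^n,0)\to(\C^n,0)$, and for $|t|$ small the value $ta$ is a regular value whose preimages near $0$ are exactly the critical points of $f_t$.

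\emph{Step 1: no critical points escape to $\partial B_\varepsilon$.} Since $\Crit(f)\cap\overline B_\varepsilon=\{0\}$, the continuous function $\|\partial f\|$ has a positive minimum $m$ on the compact annulus $\overline B_\varepsilon\setminus B_{r}$, for any fixed $0<r<\varepsilon$. The family $f_t$ is holomorphic in $(t,z)$ on a fixed neighbourhood, so $\partial f_t\to\partial f$ uniformly on $\overline B_\varepsilon$ as $t\to0$; shrinking the neighbourhood if needed, we may assume it contains $\overline B_\varepsilon$. Hence for $|t|$ small, $\|\partial f_t-\partial f\|<m$ on $\overline B_\varepsilon$. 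Then $\partial f_t$ has no zeros in $\overline B_\varepsilon\setminus B_r$, and all critical points of $f_t$ in $B_\varepsilon$ lie in $B_r$. Taking $r$ arbitrarily small (in particular $r\le\varepsilon/2$) gives the localization claim.

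\emph{Step 2: counting.} The same estimate gives the linear homotopy $s\mapsto \partial f+s(\partial f_t-\partial f)$, $s\in[0,1]$, with no zeros on $\partial B_\varepsilon$. Therefore the Brouwer degree of $\partial f_t$ on $B_\varepsilon$ relative to $0$ equals that of $\partial f$, which is the local degree $\mu(f)$ quoted from \cite{ArnoldGuseinZadeVarchenko1985}. For $t\ne0$ small, $f_t$ is Morse on the neighbourhood by the definition of a morsification, so every zero of $\partial f_t$ in $B_\varepsilon$ is nondegenerate. Its complex Jacobian $\operatorname{Hess}_\C f_t$ is invertible, and the real Jacobian determinant equals $|\det\operatorname{Hess}_\C f_t|^2>0$. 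Every zero therefore contributes $+1$ to the degree. The zeros are isolated in the compact set $\overline B_r$, so they are finitely many, and their number is exactly $\mu(f)$.

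\emph{Expected obstacle.} The delicate point is the identification of $\mu(f)=\dim_\C\OO/(\partial f)$ with the topological local degree of $\partial f$. This is a classical result, and I will cite it from \cite[Ch.~I, \S5]{ArnoldGuseinZadeVarchenko1985} rather than reprove it. I will also check that the fixed neighbourhood in Definition~\ref{def:morsification} can be taken to contain $\overline B_\varepsilon$, shrinking $\varepsilon$ if necessary. This is harmless, because Step 1 only requires $\Crit(f)\cap\overline B_\varepsilon=\{0\}$, which is preserved under shrinking.
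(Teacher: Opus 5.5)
Your argument is correct and is essentially the degree argument the paper relies on. The paper does not prove this statement; it cites Arnold--Gusein-Zade--Varchenko, but its preceding discussion sets up your ingredients: the finite map $\partial f$ has local degree $\mu(f)$, and the critical points of $f_t$ are nondegenerate zeros of a perturbation of $\partial f$, each counted $+1$ by holomorphy. Your linear homotopy $\partial f+s(\partial f_t-\partial f)$ has one mild advantage: it covers an arbitrary morsification, not only the linear one $f-t\sum_j a_jz_j$ the paper describes, whose critical points are the preimages of the regular value $ta$.
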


\begin{lemma}
\label{lem:index-n}
Let $p$ be a nondegenerate critical point of a holomorphic function
$F$ on an open set of $\C^n$.  Then $p$ is a nondegenerate critical
point of $\re F:\R^{2n}\to\R$ of Morse index exactly $n$.
\end{lemma}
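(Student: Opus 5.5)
The plan is to reduce the statement to a linear-algebra fact about the real Hessian of $\re F$ at $p$, computed directly from the Cauchy--Riemann equations. After translating, assume $p=0$ and $F(0)=0$, and write $F(z)=\tfrac12\, z^{T}Az+O(|z|^3)$ with $A=\bigl(\partial^2F/\partial z_j\partial z_k(0)\bigr)$ the complex symmetric Hessian. Nondegeneracy of $p$ as a critical point of $F$ means $\det A\neq0$. Since $dF(0)=0$ and $d\re F=\re dF$, the point $0$ is a critical point of $\re F$. Its real Hessian is the real quadratic form $Q(v)=\re\bigl(\tfrac12 v^{T}Av\bigr)$ on $\C^n\cong\R^{2n}$, so the task is to show that $Q$ is nondegenerate of index exactly $n$.

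For nondegeneracy of $Q$, I will compute the bilinear form: $B(v,w)=\re(v^TAw)$. If $B(v,\cdot)=0$, then $\re(v^TAw)=0$ for all $w$. Replacing $w$ by $iw$ gives $\im(v^TAw)=0$ as well, so $v^TA=0$, and hence $v=0$ because $A$ is invertible.

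For the index, the key observation is that $Q(iv)=\re\bigl(\tfrac12(iv)^TA(iv)\bigr)=-Q(v)$. Thus multiplication by $i$ is a real linear isometry of $\R^{2n}$, for the standard metric, that takes $Q$ to $-Q$. Let $V_+$ and $V_-$ be the positive and negative eigenspaces of $Q$; since $Q$ is nondegenerate, they have dimensions $k$ and $2n-k$. The map $v\mapsto iv$ carries any subspace on which $Q$ is negative definite to one on which $Q$ is positive definite. By Sylvester's law this gives $2n-k\le k$, and symmetrically $k\le 2n-k$, so $k=n$. The Morse index, the dimension of a maximal negative definite subspace, is therefore $n$.

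As a cleaner alternative for the write-up, Takagi factorization gives $A=U^{T}DU$ with $U$ unitary and $D=\operatorname{diag}(\sigma_j)$, $\sigma_j>0$. This is a complex-linear change of coordinates, so it preserves holomorphy, and in these coordinates $Q=\tfrac12\sum_j\sigma_j(x_j^2-y_j^2)$, which displays index $n$ explicitly. There is no real obstacle here. The only point needing care is to note that the real Hessian of $\re F$ really is $\re$ of the complex quadratic form. This holds because the $\bar z$-derivatives vanish, and the first two terms of the real Taylor expansion are the real part of the holomorphic ones.
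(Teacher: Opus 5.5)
Your proof is correct, but it takes a different route from the paper.

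The paper invokes the holomorphic Morse lemma to obtain local holomorphic coordinates in which $F=F(p)+\sum_j z_j^2$ exactly. It then reads off $\re F-\re F(p)=\sum_j(x_j^2-y_j^2)$ and appeals to the invariance of nondegeneracy and Morse index under diffeomorphism.

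You instead work only with the second-order jet. Nondegeneracy and the Morse index depend only on the real Hessian $Q(v)=\re\bigl(\tfrac12 v^TAv\bigr)$, so you never need a nonlinear change of coordinates. The identity $Q(iv)=-Q(v)$, combined with Sylvester's law, then settles both claims by pure linear algebra. Your Takagi variant, or indeed any complex congruence $A=P^TP$, is the linear shadow of the paper's Morse lemma.

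Your route buys elementarity and a coordinate-free reason for the answer: multiplication by $i$ swaps the positive and negative cones of the Hessian. Polarized, $Q(iv)=-Q(v)$ also shows that for a Hermitian metric $J$ anticommutes with the self-adjoint Hessian operator used in Step~3 of the proof of Theorem~A, so its spectrum is symmetric under $\lambda\mapsto-\lambda$. The paper's route buys a full local normal form for $\re F$ near $p$, not just its Hessian, at the cost of citing a heavier theorem.

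One small aside: that $v\mapsto iv$ is an isometry is true but unused, since the Sylvester step needs only that it is a linear isomorphism.
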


\begin{proof}
By the holomorphic Morse lemma we may choose local holomorphic
coordinates in which $F=F(p)+\sum_{j=1}^n z_j^2$.  Writing
$z_j=x_j+iy_j$,
\[
\re F=\re F(p)+\sum_{j=1}^n\bigl(x_j^2-y_j^2\bigr),
\]
which is a nondegenerate real quadratic form of signature $(n,n)$,
hence of Morse index $n$.  Nondegeneracy and the Morse index of
$\re F$ are invariant under diffeomorphism, so the statement holds in the original coordinates.
\end{proof}

\subsection{The Conley index}
\label{sec:conley}

Let $\varphi$ be a (local) flow on a locally compact metric space: a
rule $\varphi_t$ that moves each point $x$ along a curve
$t\mapsto\varphi_t(x)$, its \emph{orbit}.  An orbit is \emph{complete}
if it is defined for all $t\in\R$, so it records the whole past and
future of the point.  For the flow of a vector field $X$, that is, of the equation
$\dot z=X(z)$, an orbit is a solution $z(t)$ of this equation; we
call it an $X$-\emph{orbit} to indicate which field it belongs to.  A \emph{rest point} (equilibrium) is a point with
$\varphi_t(x)=x$ for all $t$, that is, a zero of the vector field.  A
set is \emph{invariant} if it contains the complete orbit of each of
its points, and for a compact set $N$ the \emph{maximal invariant set}
$\Inv(N,\varphi)$ below consists of the points whose complete orbit
stays in $N$.
A compact set $N$ is an \emph{isolating neighborhood} if
\[
\Inv(N,\varphi)
=\{x\in N:\varphi_\R(x)\subset N\}
\subset\operatorname{int}N,
\]
and $S=\Inv(N,\varphi)$ is then an \emph{isolated invariant set}
\cite[Ch.~III, \S3.2]{Conley1978}.  In words: no complete orbit in $N$
touches the boundary of $N$, so $S$ stays away from $\partial N$ and
remains stable under small perturbations of the flow.  Any smaller compact neighborhood of
$S$ in $N$ is again isolating, and disjoint isolated invariant sets
have disjoint isolating neighborhoods whose union isolates their
union \cite[Ch.~III, \S3.2B,C]{Conley1978}.
An \emph{index pair} for $S$ in $N$ is an ordered pair $(N_1,N_2)$ of
compact sets, $N_2\subset N_1\subset N$, both positively invariant
relative to $N$, with $S\subset\operatorname{int}(N_1\setminus N_2)$,
and such that if $x\in N_1$, $t>0$ and $x\cdot t\notin N$, then
$x\cdot[0,t']\subset N_1$ and $x\cdot t'\in N_2$ for some $t'\in[0,t]$
\cite[Ch.~III, \S4.1]{Conley1978}.  Here $N_1$ is a region around $S$ and
$N_2\subset N_1$ is its \emph{exit set}, the part of the boundary
through which orbits leave; ``positively invariant relative to $N$''
means that an orbit starting in the set stays in it as long as it stays
in $N$.  Collapsing $N_2$ to a point records how the flow leaves
$N_1$ near $S$.  Index pairs exist in every
neighborhood of $S$ in $N$ \cite[Ch.~III, \S4.1D]{Conley1978}, and the pointed homotopy type
\[
h(\varphi,S)=\bigl[\,N_1/N_2\bigr]
\]
is independent of the choice of index pair
\cite[Ch.~III, \S5.1--5.2]{Conley1978}; it is the \emph{Conley index}
(Conley's homotopy index) of $S$.  Its reduced singular homology is the
\emph{homological Conley index} $\CH_*(\varphi,S)=\wtH_*(N_1/N_2)$.

\begin{figure}[htb]
\centering
\begin{tikzpicture}[scale=0.9,transform shape,>=Stealth,
 fl/.style={blue!60!black,thick,postaction={decorate},decoration={markings,mark=at position 0.55 with {\arrow{Stealth[length=2mm]}}}}]
\def\saddle{%
 \foreach \sx/\sy in {1/1,-1/1,-1/-1,1/-1}{
  \foreach \c in {0.25,0.6}{
   \draw[fl] plot[domain=1.9:{\c/1.9}] ({\sx*\x},{\sy*\c/\x});
  }}
 \draw[fl] (1.9,0)--(0,0); \draw[fl] (-1.9,0)--(0,0);
 \draw[fl] (0,0)--(0,1.9); \draw[fl] (0,0)--(0,-1.9);}
\begin{scope}
 \fill[blue!6] (0,0) circle (1.3);
 \begin{scope}\clip (0,0) circle (1.3);\saddle\end{scope}
 \draw[very thick] (0,0) circle (1.3);
 \fill (0,0) circle (2.4pt);
 \node[below left] at (0,0) {$S$};
 \node at (-1.15,1.2) {$N$};
 \node[align=center] at (0,-2.0) {\small isolating: $S\subset\operatorname{int}N$};
\end{scope}
\begin{scope}[xshift=5.2cm]
 \fill[blue!6] (1.1,0) circle (1.1);
 \begin{scope}\clip (1.1,0) circle (1.1);\saddle\end{scope}
 \draw[very thick] (1.1,0) circle (1.1);
 \fill (0,0) circle (2.4pt);
 \node[left] at (0,0) {$S$};
 \node at (2.0,1.0) {$N$};
 \node[align=center] at (1.1,-2.0) {\small not isolating: $S\cap\partial N\neq\emptyset$};
\end{scope}
\end{tikzpicture}
\caption{Isolating (left) and non-isolating (right) neighborhoods of $S=\{0\}$ for a saddle.}
\label{fig:isolating-nbhd}
\end{figure}

We use the following structural properties.

The next property says that the index can be followed along a family of
flows, as long as the same set $N$ remains isolating (\emph{continuation}).

A \emph{continuous family of flows} $\{\varphi^\lambda\}_{\lambda\in\Lambda}$,
with $\Lambda$ a topological space, means the flows of equations
$\dot x=F(x,\lambda)$ with $F$ continuous in $(x,\lambda)$ and Lipschitz
in $x$ \cite[Ch.~IV, \S1.1]{Conley1978}.

\begin{theorem}[Continuation]
\label{thm:continuation}
Let $\{\varphi^\lambda\}_{\lambda\in[0,1]}$ be a continuous family of
flows, and let $N$ be a compact set which is an isolating neighborhood
for $\varphi^\lambda$ for every $\lambda\in[0,1]$.  Then
\[
h\bigl(\varphi^0,\Inv(N,\varphi^0)\bigr)
\;\simeq\;
h\bigl(\varphi^1,\Inv(N,\varphi^1)\bigr).
\]
\end{theorem}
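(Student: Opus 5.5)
The plan is to reduce the statement to its one-parameter core, following Conley's original argument. By \cite[Ch.~IV, \S1]{Conley1978}, the family $\{\varphi^\lambda\}$ together with the constant parameter defines a single flow $\Phi$ on the product $X\times[0,1]$:
\[
\Phi_t(x,\lambda)=(\varphi^\lambda_t(x),\lambda).
\]
The hypothesis that $N$ isolates for every $\lambda$ then says exactly that $N\times[0,1]$ is an isolating neighborhood for $\Phi$. The reason is that $\Inv(N\times[0,1],\Phi)=\bigcup_\lambda \Inv(N,\varphi^\lambda)\times\{\lambda\}$, and each slice lies in $\operatorname{int}N$. Continuity of $F$ in $(x,\lambda)$ together with compactness of $N$ ensures that this invariant set is compact and stays a positive distance from $\partial N\times[0,1]$.

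The key step is to show that the index is \emph{locally constant} in $\lambda$. Fix $\lambda_0$. We take an index pair $(N_1,N_2)$ for $\varphi^{\lambda_0}$ in $N$, chosen robust under perturbation. Concretely, we use Conley's construction with Lyapunov-type functions, or Salamon's construction \cite{Salamon1985}: an index pair whose sets are defined by strict inequalities on flow-defined functions such as the time to exit $N$. We then show that for $\lambda$ near $\lambda_0$ one can build index pairs $(N_1^\lambda,N_2^\lambda)$ for $\varphi^\lambda$ satisfying
\[
N_1^\lambda\cap\text{(nbhd of }S_{\lambda_0})\subset N_1,\qquad N_2\subset N_2^\lambda ,
\]
or that the inclusion-induced maps compose to homotopy equivalences.

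A cleaner route is to work directly on the product. For a neighborhood $J=[\lambda_0-\eta,\lambda_0+\eta]$ we choose an index pair $(P_1,P_2)$ for $\Phi$ in $N\times J$. The slice $(P_1\cap X\times\{\lambda\},\,P_2\cap X\times\{\lambda\})$ is then an index pair for $\varphi^\lambda$. The two defining properties carry over as follows.
\begin{itemize}
\item Positive relative invariance and the exit condition pass to slices, because $\Phi$ preserves $\lambda$.
\item The condition $S_\lambda\subset\operatorname{int}(P_1^\lambda\setminus P_2^\lambda)$ follows from the corresponding interior property in the product.
\end{itemize}
It then remains to show that $P_1^\lambda/P_2^\lambda\simeq P_1/P_2$ for each $\lambda\in J$.

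This last step is the main obstacle. My first attempt would be to show that the inclusion $P_1^\lambda/P_2^\lambda\hookrightarrow P_1/P_2$ is a homotopy equivalence, by deforming $P_1$ into the slice along $J$. A naive projection does not respect the flow. Instead, I would use Conley's device of flowing: push points forward by $\Phi$ until they exit or become close to the invariant set, and interpolate in the parameter.

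If this becomes delicate, the fallback is the following. Use the independence of index pairs (\cite[Ch.~III, \S5]{Conley1978}) for the product flow to get $h(\Phi,\cdot)$. Then show that both the slice at $\lambda$ and the product give the same homotopy type, by comparing them with a product-like index pair $P_1^{\lambda_0}\times J$ built for small $\eta$, using upper semicontinuity of $\lambda\mapsto \Inv(N,\varphi^\lambda)$.

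Finally, compactness of $[0,1]$ finishes the proof. Cover $[0,1]$ by finitely many such intervals $J$. On each interval the index is constant, and overlapping intervals share parameter values. Chaining these equalities gives
\[
h(\varphi^0,\Inv(N,\varphi^0))\simeq h(\varphi^1,\Inv(N,\varphi^1)).
\]
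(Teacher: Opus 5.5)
Your reduction is sound as far as it goes. $N\times[0,1]$ isolates the product flow $\Phi$. Slices of an index pair for $\Phi$ in $N\times J$ are index pairs for the $\varphi^\lambda$, because $\Phi$ preserves $\lambda$. Local constancy together with connectedness of $[0,1]$ then finishes the argument. The gap is that local constancy itself (in your formulation, $P_1^\lambda/P_2^\lambda\simeq P_1/P_2$) is never proved, and it is the whole content of the theorem. The paper does not prove the theorem; it cites Conley (Ch.~IV, \S1.4) and Salamon, whose arguments supply precisely this step.

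None of your three suggestions closes the gap.
\begin{enumerate}
\item The ``robust'' index pair is asserted rather than constructed. Positive invariance relative to $N$ and the exit condition are not open conditions on the flow. For instance, an orbit of $\varphi^{\lambda_0}$ touching $\partial N_1$ from inside, at a point of $\operatorname{int}N$ away from $N_2$, can be pushed out of $N_1$ by an arbitrarily small perturbation. The exit time from $N$ is only upper semicontinuous and itself changes with $\lambda$. Moreover, your displayed inclusions point in opposite directions ($N_1^\lambda\subset N_1$ but $N_2\subset N_2^\lambda$). So they do not induce a map between $N_1^\lambda/N_2^\lambda$ and $N_1/N_2$ in either direction.
\item In the product route, no deformation of $P_1$ into a slice can come from $\Phi$, which never changes $\lambda$. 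Moving $\lambda$ at fixed $x$ preserves neither $P_1$ nor $P_2$. That $P_1$ is not a product over $J$ is the difficulty itself.
\item The fallback presupposes that $(P_1^{\lambda_0}\times J,\,P_2^{\lambda_0}\times J)$ is an index pair for $\Phi$, i.e.\ that $(P_1^{\lambda_0},P_2^{\lambda_0})$ is an index pair for every $\varphi^\lambda$, $\lambda\in J$. That is the same non-open condition again. Upper semicontinuity of $\lambda\mapsto\Inv(N,\varphi^\lambda)$ controls only the invariant sets, not the behaviour of the flows on all of $P_1$.
\end{enumerate}

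What is missing is an actual stability mechanism. For the smooth vector fields to which the paper applies the theorem, one standard way is as follows. Take an isolating block $B\subset\operatorname{int}N$ for $\varphi^{\lambda_0}$ that is a manifold with corners. Its boundary should consist of an exit face and an entrance face, to which $F(\cdot,\lambda_0)$ is strictly transverse (outward, respectively inward). Strict transversality on these compact faces persists for $\lambda$ near $\lambda_0$, so $B$ remains an isolating block with the same exit set $b^+$. Upper semicontinuity gives $\Inv(N,\varphi^\lambda)\subset B$, hence $\Inv(B,\varphi^\lambda)=\Inv(N,\varphi^\lambda)$. The index of every nearby $\varphi^\lambda$ is then represented by the same pointed space $B/b^+$. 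In the general topological setting you would instead have to carry out the Conley--Salamon comparison of index pairs for nearby parameters, which your proposal names but does not perform.
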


This is the standard continuation theorem for the Conley index
\cite[Ch.~IV, \S1.4]{Conley1978}, \cite{Salamon1985}.

\begin{proposition}[Openness of isolation]
\label{prop:openness}
Let $\{\varphi^\lambda\}_{\lambda\in\Lambda}$ be a continuous family of
flows and let $N$ be a compact set which is an isolating neighborhood for
$\varphi^{\lambda_0}$.  Then $N$ is an isolating neighborhood for
$\varphi^{\lambda}$ for all $\lambda$ in a neighborhood of $\lambda_0$.
\end{proposition}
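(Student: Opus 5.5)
We argue by contradiction, using compactness of $N$ and continuity of the flows in $\lambda$. Write $S_\lambda=\Inv(N,\varphi^\lambda)$. Suppose the conclusion fails. Then every neighborhood of $\lambda_0$ contains a $\lambda$ for which $S_\lambda\not\subset\operatorname{int}N$. So there is a net (a sequence, if $\Lambda$ is first countable; otherwise we work with nets throughout) $\lambda_k\to\lambda_0$ and points $x_k\in\partial N$ such that the complete $\varphi^{\lambda_k}$-orbit of $x_k$ lies in $N$. Since $\partial N$ is compact, after passing to a subnet we may assume $x_k\to x\in\partial N$.

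The key step is to show that the complete $\varphi^{\lambda_0}$-orbit of $x$ stays in $N$. Fix $T>0$. The maps $(y,\lambda)\mapsto\varphi^\lambda_t(y)$ are continuous on their domain of definition. This is the standard continuous dependence of solutions of $\dot x=F(x,\lambda)$ on initial data and parameters, which holds because $F$ is continuous in $(x,\lambda)$ and Lipschitz in $x$. Consider the $\varphi^{\lambda_k}$-orbits of $x_k$ on $[-T,T]$. They are defined there and remain in the compact set $N$, on which $F$ is bounded uniformly for $\lambda$ near $\lambda_0$. We first show that the $\varphi^{\lambda_0}$-orbit of $x$ is defined on $[-T,T]$. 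If its maximal interval were shorter, the orbit would leave every compact subset of the domain, in particular $N$. By continuous dependence, nearby orbits $\varphi^{\lambda_k}(x_k)$ would then also leave $N$ within time $T$, a contradiction. Continuity then gives $\varphi^{\lambda_0}_t(x)=\lim_k\varphi^{\lambda_k}_t(x_k)$ for each $t\in[-T,T]$. Since each term lies in $N$ and $N$ is closed, the limit lies in $N$. As $T$ was arbitrary, $\varphi^{\lambda_0}_\R(x)\subset N$, that is, $x\in\Inv(N,\varphi^{\lambda_0})$.

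But $x\in\partial N$, so $x\notin\operatorname{int}N$. This contradicts the hypothesis that $N$ is an isolating neighborhood for $\varphi^{\lambda_0}$. Hence $S_\lambda\subset\operatorname{int}N$ for all $\lambda$ near $\lambda_0$.

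The main obstacle is the continuity step for local flows: making sure the limiting orbit exists on all of $[-T,T]$ rather than escaping, and having joint continuity in $(x,\lambda)$ under only the stated regularity. We handle this by the a priori confinement of the approximating orbits to the compact set $N$ together with the standard continuous-dependence theorem, as in \cite[Ch.~IV, \S1.1--1.2]{Conley1978}. A secondary point is that $\Lambda$ is only a topological space. Using nets, or assuming $\Lambda$ is metrizable as in the application $\Lambda=[0,1]$ or a disc of parameters $t$, removes this issue.
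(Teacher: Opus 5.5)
Your proof is correct. The paper gives no argument of its own, only a citation to Conley, and your compactness plus continuous-dependence argument (a limit of points of $\partial N$ carrying complete orbits in $N$ for $\lambda_k\to\lambda_0$ would lie in $\Inv(N,\varphi^{\lambda_0})\cap\partial N$) is the standard reasoning behind that reference; if you want to avoid nets, cover $\partial N$ by finitely many neighborhoods $U_x\times V_x$ of the points $(x,\lambda_0)$, on each of which a fixed time-$t_x$ map sends points outside $N$.
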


See \cite[Ch.~IV, \S1.2A, \S1.3]{Conley1978}.

\begin{theorem}[Sum property \cite{Conley1978}]
\label{thm:sum}
If $S=S_1\sqcup\dots\sqcup S_k$ is a disjoint union of isolated
invariant sets, then $S$ is isolated and
\[
h(\varphi,S)\;\simeq\;\bigvee_{j=1}^{k}h(\varphi,S_j)
\]
\cite[Ch.~I, \S4.1; Ch.~III, \S6.1D]{Conley1978}.
\end{theorem}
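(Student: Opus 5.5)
We prove the statement by reducing to $k=2$ and then building an index pair for $S$ as the disjoint union of index pairs for the pieces. Since a wedge of pointed spaces is associative up to pointed homotopy, induction on $k$ reduces everything to the case $S=S_1\sqcup S_2$: we apply the case $k=2$ to $S_1\sqcup\dots\sqcup S_{k-1}$ and $S_k$. That these two sets are isolated, disjoint and compact follows from the inductive hypothesis.

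\emph{Isolation.} By the fact recalled in \Cref{sec:conley}, \cite[Ch.~III, \S3.2B,C]{Conley1978}, we choose disjoint isolating neighborhoods $N^1,N^2$ of $S_1,S_2$. We check directly that $N=N^1\cup N^2$ is isolating, with $\Inv(N,\varphi)=S_1\sqcup S_2$. A complete orbit contained in $N$ is a connected set. The sets $N^1,N^2$ are disjoint and compact, hence separated, so the orbit lies entirely in one $N^j$. Therefore
\[
\Inv(N,\varphi)=\Inv(N^1,\varphi)\cup\Inv(N^2,\varphi)=S_1\cup S_2,
\]
and this set lies in $\operatorname{int}N^1\cup\operatorname{int}N^2\subset\operatorname{int}N$. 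So $S$ is an isolated invariant set.

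\emph{Index pair.} Next we take index pairs $(N^j_1,N^j_2)$ for $S_j$ in $N^j$, which exist by \cite[Ch.~III, \S4.1D]{Conley1978}. We put
\[
N_1=N^1_1\cup N^2_1,\qquad N_2=N^1_2\cup N^2_2,
\]
and verify the axioms of an index pair for $S$ in $N$.

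The first key observation is that an orbit segment $x\cdot[0,t]\subset N$ with $x\in N^j$ stays in $N^j$, again by connectedness and separation. Hence positive invariance of $N^j_1,N^j_2$ relative to $N^j$ gives positive invariance of $N_1,N_2$ relative to $N$.

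The exit condition also localizes. Let $x\in N^j_1$ and $x\cdot t\notin N$. The segment leaves $N^j$ before it could reach the other piece, so the exit condition in $N^j$ yields $t'\in[0,t]$ with $x\cdot[0,t']\subset N^j_1\subset N_1$ and $x\cdot t'\in N^j_2\subset N_2$.

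Finally, $S\subset\operatorname{int}(N_1\setminus N_2)$ holds. Indeed, $N^j_1\setminus N^j_2$ is a neighborhood of $S_j$, and intersecting it with the open set $\operatorname{int}N^j$ keeps it disjoint from the other piece.

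\emph{Computing the quotient.} Because $N_1$ is the disjoint union of the compact sets $N^1_1,N^2_1$, we have $N_1/N_2\cong (N^1_1/N^1_2)\vee(N^2_1/N^2_2)$: the two collapsed points are identified into the single basepoint. By independence of the index pair, this gives $h(\varphi,S)\simeq h(\varphi,S_1)\vee h(\varphi,S_2)$.

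The step needing most care is this last identification when some exit set $N^j_2$ is empty. There we use Conley's convention $X/\emptyset=X\sqcup\{*\}$, and we check that the homeomorphism above still holds with this convention. The homeomorphism itself follows from the quotient topology on a finite disjoint union of compact Hausdorff pieces. The separation argument for orbits is elementary but is the point on which every axiom check rests, so we state it once as a lemma and reuse it.
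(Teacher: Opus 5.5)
Your proof is correct. It is essentially the standard argument behind the result the paper cites from Conley; the paper gives no proof of its own. The steps match it: disjoint isolating neighborhoods, connectedness of orbits to localize each index-pair axiom, the union of index pairs as an index pair for $S$ in $N^1\cup N^2$, and the identification $N_1/N_2\cong (N^1_1/N^1_2)\vee(N^2_1/N^2_2)$. Two steps are simpler than you make them. The exit condition applies directly, since $x\cdot t\notin N$ already gives $x\cdot t\notin N^j$. And $S_j\subset\operatorname{int}(N^j_1\setminus N^j_2)\subset\operatorname{int}(N_1\setminus N_2)$ follows from monotonicity of the interior.
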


\begin{theorem}[Hyperbolic rest points \cite{Conley1978}]
\label{thm:hyperbolic}
If $p$ is a hyperbolic equilibrium of a smooth flow with unstable
dimension $\lambda$ (the linearization has no eigenvalue on the
imaginary axis, and exactly $\lambda$ eigenvalues with positive real
part, the directions along which orbits move away from $p$), then $h(\varphi,\{p\})\simeq S^\lambda$
\cite[Ch.~I, \S4.3; details in Ch.~IV, \S1.5A]{Conley1978}.  In
particular, for the negative gradient flow of a Morse function, the
index of a critical point of Morse index $\lambda$ is $S^\lambda$.
\end{theorem}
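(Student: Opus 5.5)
The plan is to reduce, by two applications of the continuation property (\Cref{thm:continuation}), to the linear flow $\dot x_s=-x_s$, $\dot x_u=x_u$ on $\R^{m-\lambda}\times\R^\lambda$, and then to compute the index of that flow from an explicit index pair. Translate so that $p=0$ and write the vector field as $X(x)=Ax+R(x)$, where $A=DX(0)$ is hyperbolic with unstable dimension $\lambda$ and $R(x)=o(|x|)$. Let $E^u\oplus E^s$ be the splitting into the generalized eigenspaces of eigenvalues with positive, respectively negative, real part. On each summand choose an adapted inner product: a Lyapunov-equation solution, available because the spectrum of each block lies in an open half-plane. With these inner products the symmetric part of $A_u=A|_{E^u}$ is positive definite and that of $A_s=A|_{E^s}$ is negative definite, say $\langle A_u x_u,x_u\rangle\ge c|x_u|^2$ and $\langle A_s x_s,x_s\rangle\le -c|x_s|^2$ for some $c>0$.

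\emph{Step 1 (linearization).} Consider the family $X_\sigma(x)=Ax+\sigma R(x)$, $\sigma\in[0,1]$, and the function $V=|x_u|^2-|x_s|^2$. Along any $X_\sigma$-orbit,
\[
\dot V\ \ge\ 2c|x|^2-2|x|\,|R(x)|\ \ge\ c|x|^2
\]
on a small closed box $N=\{|x_u|\le r,\ |x_s|\le r\}$, because $R(x)=o(|x|)$. Hence $V$ is strictly increasing along every nonconstant orbit segment in $N$. A complete orbit contained in $N$ has $V$ bounded and monotone, so its $\alpha$- and $\omega$-limit sets consist of points where $\dot V=0$, which forces $x=0$. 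Since $V$ is strictly increasing, the orbit must then be the rest point itself. Thus $\Inv(N,X_\sigma)=\{0\}\subset\operatorname{int}N$ for every $\sigma$, so $N$ is isolating uniformly in $\sigma$, and \Cref{thm:continuation} gives $h(X,\{0\})\simeq h(A,\{0\})$.

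\emph{Step 2 (normalization).} Deform $A_u$ to $I$ by $(1-s)A_u+sI$ and $A_s$ to $-I$ by $(1-s)A_s-sI$. Convex combinations preserve the definiteness of the symmetric parts in the adapted inner products, so the same estimate $\dot V\ge c'|x|^2$ holds along the whole path. The same box is therefore isolating throughout, and a second continuation reduces the problem to the field $(-x_s,x_u)$.

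\emph{Step 3 (explicit index pair).} For the field $(-x_s,x_u)$ take $N_1=D_s\times D_u$ and $N_2=D_s\times\partial D_u$, where $D_s$ and $D_u$ are closed unit balls in $E^s$ and $E^u$. The norm $|x_s|$ decreases and $|x_u|$ increases along orbits, so both sets are positively invariant relative to $N_1$, and orbits leave $N_1$ only through $N_2$. Thus $(N_1,N_2)$ is an index pair in Conley's sense, with $\Inv=\{0\}$. Collapsing the contractible factor gives
\[
N_1/N_2\ \simeq\ D^\lambda/\partial D^\lambda\ \simeq\ S^\lambda.
\]
For the negative gradient flow of a Morse function at a critical point of Morse index $\lambda$, the linearization is $-\operatorname{Hess}$, which has exactly $\lambda$ positive eigenvalues, so the last claim follows.

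The main obstacle is the uniformity of isolation along the homotopies, which continuation requires. The adapted-norm Lyapunov function $V$ is what handles it, so the key point to check carefully is the existence of inner products in which the symmetric parts are definite. I would get these from $P=\int_0^\infty e^{-tA_u^{T}}e^{-tA_u}\,dt$ and its analogue for the stable block.
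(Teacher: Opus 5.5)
Your proposal is correct. It takes a different route from the paper only in the sense that the paper gives no argument: the statement is quoted from Conley (Ch.~I, \S4.3; Ch.~IV, \S1.5A). Your proof supplies the missing argument rather than varying one.

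What your version buys is self-containment within the paper's own toolkit. The single quadratic Lyapunov function $V=|x_u|^2-|x_s|^2$, in adapted norms, makes one fixed box isolating uniformly along both homotopies, $Ax+\sigma R(x)$ and $(1-s)A+s(I\oplus -I)$. So only \Cref{thm:continuation} and the definition of an index pair are needed, and non-gradient hyperbolic points are handled directly. This is also the mechanism the paper uses elsewhere:
\begin{itemize}
\item a strict Lyapunov function for uniform isolation, as in \Cref{lem:isolation};
\item the product block $D_s\times D_u$ with exit set $D_s\times\partial D_u$, as in \Cref{fig:fiber-index} and \Cref{prop:splitting}.
\end{itemize}
What the citation buys is brevity. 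It also offloads the technicalities of local flows and of the continuation theorem itself, which is where the real depth lies. Your argument still relies on that theorem.

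Two small precisions are worth adding, though neither is a gap. First, for $\lambda=0$ read $D^0/\partial D^0=D^0/\emptyset$ as a point with a disjoint base point, which is $S^0$. This is consistent with $N_2=\emptyset$ in Step~3. Second, for a general Riemannian metric the linearization of $-\nabla g$ at a critical point is not literally minus the Hessian matrix. It is $-A$, where $A$ is the metric-self-adjoint operator defined by $h(Au,v)=\operatorname{Hess}(u,v)$. The eigenvalues of $A$ are real and, by Sylvester's law, have the signature of the Hessian. This gives both hyperbolicity and unstable dimension exactly $\lambda$, as in Step~3 of the proof of Theorem~A.
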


\begin{figure}[htb]
\centering
\begin{tikzpicture}[scale=0.9,transform shape,>=Stealth,
 fl/.style={blue!60!black,thick,postaction={decorate},decoration={markings,mark=at position 0.55 with {\arrow{Stealth[length=2mm]}}}},
 ex/.style={red!70!black,line width=2pt}]
\begin{scope}
 \fill[blue!6] (0,0) circle (1.2);
 \foreach \a in {0,60,...,300}{\draw[fl] (\a:1.2)--(0,0);}
 \draw[thick] (0,0) circle (1.2);
 \fill (0,0) circle (2pt);
 \node at (0,-1.9) {\small $\lambda=0$: $N_2=\emptyset$};
 \node at (0,-2.4) {\small $N_1/N_2\simeq S^0$};
\end{scope}
\begin{scope}[xshift=4.2cm]
 \fill[blue!6] (-1.2,-1.2) rectangle (1.2,1.2);
 \foreach \sx/\sy in {1/1,-1/1,-1/-1,1/-1}{
  \foreach \c in {0.3,0.7}{
   \draw[fl] plot[domain=1.2:{\c/1.2}] ({\sx*\x},{\sy*\c/\x});}}
 \draw[fl] (1.2,0)--(0,0); \draw[fl] (-1.2,0)--(0,0);
 \draw[fl] (0,0)--(0,1.2); \draw[fl] (0,0)--(0,-1.2);
 \draw[thick] (-1.2,-1.2) rectangle (1.2,1.2);
 \draw[ex] (-1.2,1.2)--(1.2,1.2); \draw[ex] (-1.2,-1.2)--(1.2,-1.2);
 \fill (0,0) circle (2pt);
 \node at (0,-1.9) {\small $\lambda=1$: $N_2=$ exit set};
 \node at (0,-2.4) {\small $N_1/N_2\simeq S^1$};
\end{scope}
\begin{scope}[xshift=8.4cm]
 \fill[blue!6] (0,0) circle (1.2);
 \foreach \a in {0,60,...,300}{\draw[fl] (0,0)--(\a:1.2);}
 \draw[ex] (0,0) circle (1.2);
 \fill (0,0) circle (2pt);
 \node at (0,-1.9) {\small $\lambda=2$: $N_2=\partial N_1$};
 \node at (0,-2.4) {\small $N_1/N_2\simeq S^2$};
\end{scope}
\end{tikzpicture}
\caption{Index pairs $(N_1,N_2)$ of hyperbolic rest points; $N_2$ (red) is the exit set.}
\label{fig:index-pairs}
\end{figure}

\section{The real gradient flow: proof of Theorem A}
\label{sec:proofA-group}

\subsection{Gradient dynamics of the real part}
\label{sec:realpart}

Let $g=\re f$.  Its \emph{negative gradient flow} is the flow of the
equation $\dot z=-\nabla g(z)$: each point moves in the direction of
steepest descent of $g$, so $g$ decreases along orbits and the rest
points are the critical points of $g$.  The gradient depends on a choice
of Riemannian metric; consider first the standard Hermitian metric on
$\C^n$.  Identifying $T\C^n\cong\C^n$, the Cauchy--Riemann equations
give
\[
\nabla g(z)=\overline{\df(z)},
\]
so the negative gradient flow is the solution flow of
\begin{equation}\label{eq:flow}
\dot z=-\,\overline{\df(z)}.
\end{equation}

\begin{lemma}[Target dynamics]
\label{lem:target}
Along any solution $z(t)$ of \eqref{eq:flow},
\[
\frac{d}{dt}\,f(z(t))
=\df(z(t))\cdot\dot z(t)
=-\bigl\|\df(z(t))\bigr\|^{2}
\;\in\;\R_{\le0}.
\]
Consequently $\im f$ is a first integral of the flow, $\re f$ is
strictly decreasing along nonconstant orbits, and the image under
$f$ of every orbit travels along a horizontal line in $\C$, moving
from right to left with speed $\|\df\|^2$.
\end{lemma}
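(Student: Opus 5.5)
The plan is a direct chain-rule computation in complex coordinates, using only the holomorphy of $f$ and the form \eqref{eq:flow} of the flow. Since $f$ is holomorphic, $\partial f/\partial\bar z_j=0$. Hence along any $C^1$ curve $z(t)$,
\[
\frac{d}{dt}f(z(t))=\sum_j\frac{\partial f}{\partial z_j}\dot z_j .
\]
In the notation of the statement, this is $\df(z(t))\cdot\dot z(t)$, with $\cdot$ the bilinear (non-Hermitian) pairing. Substituting $\dot z_j=-\overline{\partial f/\partial z_j}$ from \eqref{eq:flow} gives
\[
-\sum_j\Bigl|\frac{\partial f}{\partial z_j}\Bigr|^2=-\|\df\|^2 .
\]
This is a nonpositive real number, which proves the displayed identity. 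The only care needed is to justify \eqref{eq:flow}, namely $\nabla g=\overline{\df}$, which is recorded just before the lemma. To recheck it, write $\partial f/\partial z_j=\partial_{x_j}\re f-i\,\partial_{y_j}\re f$ using the Cauchy--Riemann equations. Then $\overline{\partial f/\partial z_j}=\partial_{x_j}g+i\,\partial_{y_j}g$, which is the $j$-th component of $\nabla g$ under $\C^n\cong\R^{2n}$ with the standard metric.

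The consequences follow by taking real and imaginary parts.

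\emph{First integral.} The imaginary part of $\tfrac{d}{dt}f(z(t))$ vanishes identically, so $\im f(z(t))$ is constant.

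\emph{Monotonicity.} The real part gives $\tfrac{d}{dt}\re f(z(t))=-\|\df\|^2\le0$. Equality holds exactly at critical points of $f$, and these are the rest points of \eqref{eq:flow}. A nonconstant orbit never meets a rest point, by uniqueness of solutions for the smooth (hence locally Lipschitz) field $-\overline{\df}$. So along a nonconstant orbit the derivative is strictly negative at every time, and $\re f$ is strictly decreasing.

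\emph{Image curve.} The curve $t\mapsto f(z(t))$ has constant imaginary part and real part decreasing with speed $\|\df\|^2$. So it moves along a horizontal line in $\C$, from right to left, at speed $\|\df\|^2$.

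I do not expect a genuine obstacle here. The one point to state clearly is that strict decrease uses uniqueness of solutions to rule out a nonconstant orbit passing through a zero of $\df$. The rest is bookkeeping of conjugations in the chain rule.
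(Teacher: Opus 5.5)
Your proposal is correct and follows the paper's own proof: substitute \eqref{eq:flow} into the holomorphic chain rule and use $\df\cdot\overline{\df}=\|\df\|^2$. Your extra remarks, namely the Cauchy--Riemann check of $\nabla g=\overline{\df}$ and the use of uniqueness of solutions to get strict rather than weak decrease along nonconstant orbits, only make explicit what the paper leaves implicit.
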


\begin{proof}
Substitute \eqref{eq:flow} and use $\df\cdot\overline{\df}=\|\df\|^2$.
\end{proof}

A \emph{first integral} is a function that is constant along every
orbit, and a \emph{Lyapunov function} is one that does not increase
along orbits; it is \emph{strict} when it decreases strictly along every
nonconstant orbit.  Thus $\im f$ confines the image of each orbit to a
horizontal line, and $\re f$ forces it to move in one direction only.

\begin{figure}[htb]
\centering
\begin{tikzpicture}[>=Stealth, scale=0.85]
\draw[->,thick] (-3.2,0) -- (3.2,0) node[right] {$\operatorname{Re} w$};
\draw[->,thick] (0,-2.0) -- (0,2.0) node[above] {$\operatorname{Im} w$};

\foreach \y in {-1.4,-0.85,-0.3,0.25,0.8,1.35}{
  \draw[->,blue!55!black,thick] (2.9,\y) -- (-2.6,\y);
}

\filldraw[black] (2.2,1.35) circle (1.6pt);
\node[above] at (2.2,1.55) {\scriptsize $f(z(0))$};
\filldraw[black] (-1.6,1.35) circle (1.6pt);
\node[above] at (-1.6,1.55) {\scriptsize $f(z(t))$};
\draw[->,black!70,thin] (2.15,1.3) .. controls (0.3,1.0) .. (-1.55,1.3);

\node at (0,-2.5)
  {$\dfrac{d}{dt}f(z(t))=-\|\partial f(z(t))\|^2\in\mathbb R_{\le 0}$};
\end{tikzpicture}
\caption{The image $f(z(t))$ moves leftward at constant $\operatorname{Im}w$ (\Cref{lem:target}).}
\label{fig:dynamics}
\end{figure}

\begin{remark}
\label{rem:hermitian}
Let $h$ be any Hermitian metric (that is, $J$-invariant) and
$X=-\nabla_h g$.  The Cauchy--Riemann equations give
$d(\im f)=-dg\circ J$, hence
$d(\im f)(\nabla_h g)=-dg(J\nabla_h g)=-h(\nabla_h g,J\nabla_h g)=0$,
since $J$ is skew-adjoint for the Hermitian metric $h$,
and $dg(\nabla_h g)=\|dg\|_h^2$.  Therefore
\[
\frac{d}{dt}f(z(t))=-\|dg(z(t))\|_h^2\in\R_{\le0}
\]
along $X$-orbits for every Hermitian metric: $\im f$ is a first
integral and $\re f$ a strict Lyapunov function, exactly as in
\Cref{lem:target}.  For a general Riemannian metric only the Lyapunov
property is available: $\im f$ need not be a first integral.
\end{remark}

\begin{lemma}[Isolation, uniform in the metric]
\label{lem:isolation}
Let $g=\re f$, let $B_\varepsilon$ denote the open ball of radius $\varepsilon$ at
$0$ in $\C^n\cong\R^{2n}$ and $\overline B_\varepsilon$ its closure, with
$\varepsilon>0$ small enough that
$\Crit(f)\cap\overline B_\varepsilon=\{0\}$ (possible since the
critical point is isolated).  Let $\{h_s\}_{s\in[0,1]}$ be a family of
smooth Riemannian metrics on a neighborhood of
$\overline B_\varepsilon$, depending continuously on $s$ in the
$C^1$ topology, let $X_s=-\nabla^{h_s}g$ be the negative
gradient field of $g$ for $h_s$, and let $\varphi^s$ be its flow.
Then, for every $s\in[0,1]$:
\begin{enumerate}[label=\emph{(\roman*)},leftmargin=*]
\item the critical set of $g$ does not depend on the metric:
$\Crit(g)\cap\overline B_\varepsilon=\{0\}$, and $X_s(z)=0$ if and only
if $z\in\Crit(g)$;
\item $g$ is a strict Lyapunov function for $X_s$, that is, $g$
strictly decreases along every nonconstant orbit of $X_s$;
\item $\Inv\bigl(\overline B_\varepsilon,\varphi^s\bigr)=\{0\}
\subset B_\varepsilon=\operatorname{int}\overline B_\varepsilon$.
\end{enumerate}
In particular $\overline B_\varepsilon$ is a common isolating
neighborhood for the whole family, and $\{0\}$ is an isolated invariant
set of each $\varphi^s$.
\end{lemma}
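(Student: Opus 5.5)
The plan is to prove the three items in order, with (iii) following from (i) and (ii) by a standard Lyapunov argument on the compact ball.

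For (i), recall that $\nabla^{h_s}g$ is defined by $h_s(\nabla^{h_s}g,v)=dg(v)$ for all $v$. Since $h_s$ is nondegenerate, $X_s(z)=0$ holds exactly when $dg(z)=0$, that is, when $z\in\Crit(g)$. By Cauchy--Riemann, $dg(z)=\re\bigl(\df(z)\cdot dz\bigr)$. This vanishes iff $\df(z)=0$: if $\re(\df\cdot v)=0$ for all $v\in\C^n$, replacing $v$ by $iv$ shows that $\im(\df\cdot v)=0$ as well. Hence $\Crit(g)=\Crit(f)$, and $\Crit(g)\cap\overline B_\varepsilon=\{0\}$ by the choice of $\varepsilon$.

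For (ii), along any $X_s$-orbit we have
\[
\frac{d}{dt}g(z(t))=dg(X_s)=-h_s(\nabla^{h_s}g,\nabla^{h_s}g)=-\|dg\|_{h_s}^2\le 0,
\]
with equality only at zeros of $X_s$. A nonconstant orbit never meets a rest point, by uniqueness of solutions: $X_s$ is $C^1$ and hence locally Lipschitz, since $h_s$ is smooth. So the derivative is strictly negative along such an orbit, and $g$ strictly decreases.

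For (iii), let $z\in\Inv(\overline B_\varepsilon,\varphi^s)$. Its full orbit $\gamma$ stays in the compact ball, so its $\alpha$- and $\omega$-limit sets are nonempty, compact, invariant and contained in $\overline B_\varepsilon$. Along $\gamma$ the function $g$ is monotone and bounded, so it has limits $g_{\pm\infty}$ as $t\to\pm\infty$. The function $g$ is constant on each limit set, equal to $g_{+\infty}$ on the $\omega$-limit set and to $g_{-\infty}$ on the $\alpha$-limit set. By (ii), an invariant set on which $g$ is constant consists of rest points. By (i), both limit sets therefore equal $\{0\}$, so $g_{+\infty}=g_{-\infty}=g(0)$. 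Since $g$ is nonincreasing along $\gamma$ and has the same limit at both ends, it is constant on $\gamma$. By (ii) $\gamma$ is then a rest point, so $z=0$. Conversely, $0$ is a rest point and hence invariant, which gives $\Inv(\overline B_\varepsilon,\varphi^s)=\{0\}\subset B_\varepsilon$. The final sentence of the lemma is then just the definition of an isolating neighborhood, applied for each $s$.

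The main obstacle is the limit-set step. One must justify that forward and backward orbits remain defined for all time, which holds because $\gamma$ stays in a compact set where $X_s$ is defined. One must also check that the invariance of limit sets and the constancy of $g$ on them hold for the local flow restricted to a neighborhood of $\overline B_\varepsilon$. Continuity of $h_s$ in $s$ is not needed for (i)--(iii) at a fixed $s$. It matters only for later use of \Cref{thm:continuation}, where it guarantees that $\{X_s\}$ is a continuous family of flows.
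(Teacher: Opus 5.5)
Your proposal is correct and follows essentially the same route as the paper: nondegeneracy of $h_s$ plus Cauchy--Riemann for (i), the identity $\frac{d}{dt}g=-\|\nabla^{h_s}g\|_{h_s}^2$ for (ii), and the $\alpha$/$\omega$-limit-set argument forcing $g_{+}=g_{-}=g(0)$ for (iii). You also explicitly invoke uniqueness of solutions to rule out a nonconstant orbit passing through a rest point, a step the paper leaves implicit when deducing strict decrease in (ii).
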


\begin{proof}
\emph{(i)} By definition $\nabla^{h_s}g$ is the unique vector field with
$h_s(\nabla^{h_s}g,v)=dg(v)$ for every tangent vector $v$; since
$h_s$ is nondegenerate, $\nabla^{h_s}g(z)=0$ if and only if
$dg(z)=0$, and this last condition does not involve $h_s$.  By the
Cauchy--Riemann equations $dg=0$ iff $\df=0$, so
$\Crit(g)=\Crit(f)$ for every $s$; by hypothesis
$\Crit(g)\cap\overline B_\varepsilon=\{0\}$.

\emph{(ii)} Along an $X_s$-orbit $z(t)$,
\[
\frac{d}{dt}g(z(t))=dg\bigl(X_s(z(t))\bigr)
=-h_s\bigl(\nabla^{h_s}g,\nabla^{h_s}g\bigr)
=-\bigl\|\nabla^{h_s}g(z(t))\bigr\|_{h_s}^2\;\le\;0,
\]
with equality if and only if $z(t)\in\Crit(g)$, by (i).  Hence $g$
strictly decreases along every nonconstant orbit, so an orbit can never
return to a level of $g$ it has left.

\emph{(iii)} Fix $s$ and let $z:\R\to\overline B_\varepsilon$ be a complete
$X_s$-orbit with $z(\R)\subset\overline B_\varepsilon$.  Since $\overline B_\varepsilon$ is compact, the $\omega$-limit set $\omega(z)$ (the set of limit points of
$z(t)$ as $t\to+\infty$; $\alpha(z)$ is the same as $t\to-\infty$) and $\alpha(z)$ are nonempty, compact, connected and $X_s$-invariant (standard facts, see e.g.\ \cite[\S4.2]{Khalil2002}).
By (ii), $g(z(t))$ is nonincreasing and bounded, so it has limits $g_\pm$ as $t\to\pm\infty$, and by continuity of $g$ it equals $g_+$ on $\omega(z)$ and $g_-$ on $\alpha(z)$.  Let $w:\R\to\omega(z)$ be an orbit contained in $\omega(z)$.  Then $g(w(t))\equiv g_+$ is constant, so by (ii) $w$ is constant and lies in $\Crit(g)$.  Hence
$\omega(z)\subset\Crit(g)\cap\overline B_\varepsilon=\{0\}$, and since
$\omega(z)\neq\emptyset$, $\omega(z)=\{0\}$; likewise $\alpha(z)=\{0\}$.
Therefore $g_+=g_-=g(0)$.  If $z$ were nonconstant, $g(z(t))$ would be
strictly decreasing, which contradicts $\lim_{t\to-\infty}g(z(t))=g(0)=\lim_{t\to+\infty}g(z(t))$.
Hence $z(t)\equiv0$, proving
$\Inv(\overline B_\varepsilon,\varphi^s)=\{0\}$; as $s$ was arbitrary,
the claim holds for the whole family.
\end{proof}

\begin{corollary}[Independence of the metric]
\label{cor:metric}
The Conley index $h(\varphi_f,\{0\})$ does not depend on the choice of
Riemannian metric.
\end{corollary}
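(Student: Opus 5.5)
The plan is to connect any two Riemannian metrics by a continuous path and then apply \Cref{lem:isolation} together with the continuation theorem (\Cref{thm:continuation}). Let $h_0,h_1$ be two smooth Riemannian metrics on a neighborhood of $\overline B_\varepsilon$, with $\varepsilon$ chosen as in \Cref{lem:isolation}, so that $\Crit(f)\cap\overline B_\varepsilon=\{0\}$. Interpolate linearly by $h_s=(1-s)h_0+s\,h_1$ for $s\in[0,1]$. Positive definite symmetric forms form a convex set, so each $h_s$ is again a smooth Riemannian metric. The family depends continuously on $s$ in the $C^1$ topology, because it is affine in $s$ with smooth coefficients.

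The next step is to verify that the fields $X_s=-\nabla^{h_s}g$ form a continuous family of flows in the sense required by \Cref{thm:continuation}. In coordinates, $X_s=-h_s^{-1}\,dg$. Inversion of positive definite matrices is smooth, so the map $(z,s)\mapsto X_s(z)$ is continuous, and it is $C^1$ in $z$ uniformly for $s\in[0,1]$. This makes it Lipschitz in $z$ on the compact set $\overline B_\varepsilon$, uniformly in $s$.

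By \Cref{lem:isolation}(iii), $\overline B_\varepsilon$ is an isolating neighborhood for every $\varphi^s$, and $\Inv(\overline B_\varepsilon,\varphi^s)=\{0\}$. \Cref{thm:continuation} then gives $h(\varphi^0,\{0\})\simeq h(\varphi^1,\{0\})$. The index computed in a smaller isolating neighborhood of $\{0\}$ agrees with this one, since index pairs may be taken in any neighborhood of $S$. Hence the choice of $\varepsilon$ does not matter either, and different metrics, possibly defined on different neighborhoods, can be compared after shrinking to a common ball.

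The only point requiring care is this regularity check: the continuation theorem needs continuity in the parameter together with Lipschitz dependence on $x$, and both follow from the smoothness of $h_s^{-1}$ on a compact set. The isolation itself, which is the substantive input, is already uniform in the metric by \Cref{lem:isolation}.
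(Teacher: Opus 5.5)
Your proposal is correct and follows the paper's proof: you interpolate convexly by $h_s=(1-s)h_0+sh_1$, use \Cref{lem:isolation} for the uniform isolation of $\overline B_\varepsilon$ with $\Inv=\{0\}$, and apply \Cref{thm:continuation}. The regularity check (joint continuity and uniform Lipschitz dependence of $X_s=-h_s^{-1}dg$) and the remark on shrinking to a common ball are correct details that the paper leaves implicit.
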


\begin{proof}
For metrics $h_0,h_1$ the pointwise combination $(1-s)h_0+sh_1$ is again
a smooth positive-definite metric, so they are joined by a continuous
path.  By \Cref{lem:isolation}, $\overline B_\varepsilon$ is an isolating
neighborhood for every flow of the path, with maximal invariant set
$\{0\}$, and \Cref{thm:continuation} applies.
\end{proof}

\begin{figure}[htb]
\centering
\begin{tikzpicture}[>=Stealth, scale=1.0]
\def\R{2.6}
\fill[blue!6] (0,0) circle (\R);
\begin{scope}
\clip (0,0) circle (\R);
\foreach \k in {0.35,1.1,2.4}{
  \foreach \sx/\sy in {1/1,-1/1,1/-1,-1/-1}{
    \draw[blue!60!black,thick,domain=0.03:6,samples=120,smooth,variable=\a]
      plot ({\sx*\a},{\sy*\k/\a});
  }
}
\end{scope}
\foreach \k in {0.35,1.1,2.4}{
  \pgfmathsetmacro{\m}{sqrt(\k)}
  \foreach \sx/\sy in {1/1,-1/1,1/-1,-1/-1}{
    \draw[blue!60!black,thick,-{Stealth[length=2.2mm]}]
      ({\sx*\m*1.04},{\sy*\m*0.96}) -- ({\sx*\m*0.96},{\sy*\m*1.04});
  }
}
\draw[red!70!black,very thick] (-\R,0) -- (\R,0);
\draw[red!70!black,very thick,-{Stealth[length=2.6mm]}] (-\R,0) -- (-1.5,0);
\draw[red!70!black,very thick,-{Stealth[length=2.6mm]}] (\R,0) -- (1.5,0);
\draw[green!45!black,very thick] (0,-\R) -- (0,\R);
\draw[green!45!black,very thick,-{Stealth[length=2.6mm]}] (0,0) -- (0,1.6);
\draw[green!45!black,very thick,-{Stealth[length=2.6mm]}] (0,0) -- (0,-1.6);
\draw[very thick] (0,0) circle (\R);
\fill (0,0) circle (2.4pt);
\node[below left] at (0,0) {$0$};
\node[red!70!black,right] at (\R,0.18) {$W^s(0)$};
\node[green!45!black,above] at (0.05,\R) {$W^u(0)$};
\node at (-1.9,2.15) {$\overline B_\varepsilon$};
\node[below] at (0,-\R-0.15) {\small $-\nabla\operatorname{Re}f$ for $f(z)=z^2$: $\ \dot z=-2\bar z$};
\end{tikzpicture}
\caption{Phase portrait of $-\nabla\re f$ for $f(z)=z^2$ in
$\overline B_\varepsilon$.  The orbits are the hyperbolas
$xy=\mathrm{const}$, and every nonconstant orbit leaves
$\overline B_\varepsilon$ in forward or backward time
(\Cref{lem:isolation}).}
\label{fig:isolation}
\end{figure}

\begin{remark}
The openness of isolation (\Cref{prop:openness}) is the flow-level counterpart of the stability of
Gromoll--Meyer pairs under $C^1$-small perturbations of the function
\cite[Lemma~5.2 and Thm.~5.6]{Chang1993}, which gives the constancy of
the critical groups when each function has a unique critical point in a
fixed open set.  For a morsification, where $\mu(f)$ critical points
appear, the sum property (\Cref{thm:sum}) is used instead; it does not
require the critical values to coincide.  The Marino--Prodi
perturbation \cite[Thm.~5.7]{Chang1993} resolves a degenerate critical
point into non-degenerate ones with indices in $[m,m+n]$ ($m$ the index,
$n$ the nullity); the holomorphic morsification of
\Cref{thm:morsification-count} produces exactly $\mu(f)$ points, all of
index $n$.
\end{remark}

A \emph{connecting orbit} from $q_i$ to $q_j$ is a complete orbit that
leaves $q_i$ and arrives at $q_j$, that is, $\alpha(z)=\{q_i\}$ and
$\omega(z)=\{q_j\}$.

\begin{lemma}[Connecting orbits and critical values]
\label{lem:connections}
Let $F$ be holomorphic on an open set $U\supset\overline B_\varepsilon$,
with finitely many critical points in $\overline B_\varepsilon$,
$q_1,\dots,q_r$, all in $B_\varepsilon$, and let $w_i=F(q_i)$.  Let $h$
be a Hermitian metric on $U$ and $X=-\nabla_h\re F$.  Then every complete
$X$-orbit $z$ contained in $\overline B_\varepsilon$ is either one of
the $q_i$ or satisfies $\alpha(z)=\{q_i\}$, $\omega(z)=\{q_j\}$ for
some $i\neq j$ with
\[
w_i-w_j\in(0,\infty).
\]
Hence if $w_i-w_j\notin(0,\infty)$ for all $i\neq j$, then
$\Inv(\overline B_\varepsilon,X)=\{q_1,\dots,q_r\}$.
\end{lemma}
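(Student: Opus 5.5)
The plan is to follow the proof of \Cref{lem:isolation}(iii), replacing the single critical point by the finite set $\{q_1,\dots,q_r\}$, and then to use the first integral from \Cref{rem:hermitian}.

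\emph{Step 1: limit sets.} Let $z:\R\to\overline B_\varepsilon$ be a complete $X$-orbit. By compactness, $\alpha(z)$ and $\omega(z)$ are nonempty, compact, connected and invariant. By \Cref{rem:hermitian}, $\re F$ is a strict Lyapunov function for $X$. Hence $\re F(z(t))$ is nonincreasing and bounded, and it has limits $g_\pm$ as $t\to\pm\infty$. Any orbit inside $\omega(z)$ has constant $\re F\equiv g_+$, so it is a rest point, that is, a critical point of $\re F$. By Cauchy--Riemann this is a critical point of $F$, as in \Cref{lem:isolation}(i). Thus $\omega(z)\subset\{q_1,\dots,q_r\}$. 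This set is finite, hence totally disconnected, and $\omega(z)$ is connected, so $\omega(z)=\{q_j\}$ for some $j$. Likewise $\alpha(z)=\{q_i\}$. Consequently $z(t)\to q_i$ as $t\to-\infty$ and $z(t)\to q_j$ as $t\to+\infty$.

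\emph{Step 2: values of $F$.} By \Cref{rem:hermitian}, $\im F$ is constant along $z$ (this uses that $h$ is Hermitian). By continuity, $\im w_i=\im F(z(t))=\im w_j$. If $z$ is constant, it is a rest point, hence one of the $q_k$. Otherwise $\re F(z(t))$ is strictly decreasing, so $\re w_i=g_->g_+=\re w_j$. Therefore $w_i-w_j=\re w_i-\re w_j\in(0,\infty)$. This forces $w_i\neq w_j$, hence $i\neq j$. Note that the case $i=j$ is automatically excluded by strictness: if $i=j$ then $g_-=g_+$, which is impossible for a nonconstant orbit.

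\emph{Step 3: conclusion.} Suppose $w_i-w_j\notin(0,\infty)$ for all $i\neq j$. Then no nonconstant complete orbit in $\overline B_\varepsilon$ exists, so $\Inv(\overline B_\varepsilon,X)\subset\{q_1,\dots,q_r\}$. Conversely, each $q_k$ is a rest point in $\overline B_\varepsilon$, which gives equality.

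The only delicate point is Step 1. It needs $\omega(z)$ to be connected and to consist of rest points, which is exactly the argument already given in \Cref{lem:isolation}. Everything else follows directly from \Cref{rem:hermitian}.
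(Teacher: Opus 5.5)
Your proposal is correct and follows essentially the same route as the paper. Both arguments show that the limit sets of a complete orbit in $\overline B_\varepsilon$ consist of critical points, by the strict Lyapunov property, and are single points, by connectedness and finiteness. The first integral $\im F$ from \Cref{rem:hermitian} and the strict decrease of $\re F$ then give $\im w_i=\im w_j$ and $\re w_i>\re w_j$, which excludes $i=j$; the paper's proof is a compressed version of your Steps 1--3.
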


\begin{proof}
As in \Cref{lem:isolation}, $\alpha(z)$ and $\omega(z)$ are nonempty,
compact, connected and invariant sets contained in the finite set
$\Crit(F)\cap\overline B_\varepsilon$ (since $\re F$ is constant on
each limit set, every orbit contained in it has $\frac{d}{dt}\re F=0$ and
so lies in $\Crit(F)$; the critical set is finite and limit sets are
connected), hence are single points
$q_i,q_j$ (in particular there are no periodic orbits, which close up, nor
homoclinic orbits, which leave a rest point and return to it,
since $\re F$ is strictly decreasing along nonconstant orbits).  By \Cref{rem:hermitian}, $\im F$ is constant along $z$, so
$\im w_i=\im w_j$, and $\re F$ is strictly decreasing along $z$ unless
$z$ is constant, so $\re w_i>\re w_j$ when $z$ is nonconstant.  If
$i=j$ this is impossible, so $z$ is constant.
\end{proof}

\begin{figure}[htb]
\centering
\begin{tikzpicture}[scale=0.8,transform shape,>=Stealth]
\def\R{3.0}
\fill[blue!6] (0,0) circle (\R);
\draw[very thick] (0,0) circle (\R);
\node at (-2.25,2.2) {$\overline B_\varepsilon$};
\coordinate (q1) at (-1.4,0.5);
\coordinate (q2) at (1.3,0.4);
\coordinate (q3) at (0.1,-1.6);
\begin{scope}
\clip (0,0) circle (\R);
\draw[blue!60!black,thick,-{Stealth[length=2.4mm]}] (-3.1,-1.2) .. controls (-2.2,-0.7) .. (q1);
\draw[blue!60!black,thick,-{Stealth[length=2.4mm]}] (q1) .. controls (-1.6,1.4) .. (-1.9,3.1);
\draw[blue!60!black,thick,-{Stealth[length=2.4mm]}] (2.9,1.7) .. controls (2.2,1.2) .. (q2);
\draw[blue!60!black,thick,-{Stealth[length=2.4mm]}] (q2) .. controls (1.7,-0.5) .. (2.5,-1.8);
\draw[blue!60!black,thick,-{Stealth[length=2.4mm]}] (-1.3,-3.1) .. controls (-0.7,-2.4) .. (q3);
\draw[blue!60!black,thick,-{Stealth[length=2.4mm]}] (q3) .. controls (1.0,-2.2) .. (1.5,-3.1);
\end{scope}
\draw[red!70!black,very thick,-{Stealth[length=3mm]}] (q1) .. controls (-0.4,0.95) and (0.4,0.9) .. (q2);
\foreach \q/\l/\pos in {q1/q_1/left,q2/q_2/right,q3/q_3/below}{
  \fill (\q) circle (2.4pt); \node[\pos] at (\q) {$\l$};
}
\node[red!70!black,above] at (0,1.0) {\small $q_1\to q_2$};
\node[align=center] at (0,-3.55)
  {\small a connection $q_i\to q_j$ inside $\overline B_\varepsilon$ requires\\[-1pt]
   \small $\operatorname{Im}w_i=\operatorname{Im}w_j$ and $\operatorname{Re}w_i>\operatorname{Re}w_j$};
\end{tikzpicture}
\caption{Complete orbits in $\overline B_\varepsilon$: rest points, and connections $q_i\to q_j$ only if $w_i-w_j>0$ (\Cref{lem:connections}).}
\label{fig:connections}
\end{figure}

\begin{lemma}[Rotation of the target]
\label{lem:rotation}
Let $F_0$ be holomorphic on an open set $U\supset\overline B_\varepsilon$
with finitely many critical points $q_1,\dots,q_r$ in
$\overline B_\varepsilon$, with critical values
$w_i=F_0(q_i)$.  For $\theta\in\R$ put $F_\theta=e^{-i\theta}F_0$.
Then $\Crit(F_\theta)=\Crit(F_0)$, nondegeneracy of the critical
points is preserved, and $F_\theta(q_i)-F_\theta(q_j)
=e^{-i\theta}(w_i-w_j)$.  Outside the finite set
$\{\arg(w_i-w_j)\bmod2\pi: w_i\neq w_j\}$ of angles, no difference
$F_\theta(q_i)-F_\theta(q_j)$ with $i\neq j$ is a positive real
number.  Hence such $\theta$ exist in every interval.
\end{lemma}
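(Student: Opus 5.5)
The plan is to verify each assertion of \Cref{lem:rotation} directly, since everything reduces to the fact that multiplication by the unit complex number $e^{-i\theta}$ is a biholomorphic linear automorphism of the target $\C$.

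\emph{Critical points and nondegeneracy.} Since $F_\theta=e^{-i\theta}F_0$, we have $\partial F_\theta=e^{-i\theta}\,\partial F_0$, and $e^{-i\theta}\neq0$. Hence $\partial F_\theta(z)=0$ if and only if $\partial F_0(z)=0$, so $\Crit(F_\theta)=\Crit(F_0)$. Likewise the complex Hessian satisfies $\operatorname{Hess}F_\theta=e^{-i\theta}\operatorname{Hess}F_0$, so its determinant is $e^{-in\theta}\det\operatorname{Hess}F_0$. Thus nondegeneracy at each $q_i$ is preserved. Combined with \Cref{lem:index-n}, each $q_i$ is still a Morse point of $\re F_\theta$ of index $n$. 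The identity $F_\theta(q_i)-F_\theta(q_j)=e^{-i\theta}(w_i-w_j)$ is immediate by linearity.

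\emph{Excluded angles.} Fix $i\neq j$. If $w_i=w_j$, the difference is $0\notin(0,\infty)$, so this pair imposes no condition. If $w_i\neq w_j$, write $w_i-w_j=\rho e^{i\alpha_{ij}}$ with $\rho>0$ and $\alpha_{ij}=\arg(w_i-w_j)$. Then $e^{-i\theta}(w_i-w_j)=\rho e^{i(\alpha_{ij}-\theta)}$. This is a positive real exactly when $\theta\equiv\alpha_{ij}\pmod{2\pi}$. There are at most $r(r-1)$ ordered pairs, so the set $E$ of bad angles mod $2\pi$ is finite. Its preimage in $\R$ is a discrete set, since it is a finite union of translates of $2\pi\Z$. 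Therefore every nondegenerate interval contains $\theta\notin E$, and in particular $\theta$ may be taken arbitrarily close to $0$.

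There is no real obstacle here; the only points to be careful about are the following.

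First, pairs with equal critical values must be handled separately. The rotation cannot separate them, but they are harmless, because a zero difference is never in $(0,\infty)$. This is consistent with \Cref{lem:connections}, which excludes homoclinic and equal-value connections by the strict decrease of $\re F$.

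Second, ordered pairs should be counted rather than unordered ones, since $(0,\infty)$ is not symmetric. This still yields a finite set.

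It is also worth noting the downstream use. For such $\theta$, \Cref{lem:connections} applied to $F_\theta$ gives $\Inv(\overline B_\varepsilon)=\{q_1,\dots,q_r\}$. The path $\theta'\in[0,\theta]$ of flows of $-\nabla\re F_{\theta'}$ is continuous and keeps $\overline B_\varepsilon$ isolating by the argument of \Cref{lem:isolation}, since $\re F_{\theta'}$ remains a strict Lyapunov function with a finite critical set.
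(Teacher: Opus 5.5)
Your proof is correct and is the paper's argument: $e^{-i\theta}(w_i-w_j)\in(0,\infty)$ forces $w_i\neq w_j$ and $\theta\equiv\arg(w_i-w_j)\pmod{2\pi}$, which gives one forbidden angle per ordered pair, while pairs with equal values are harmless; you also spell out the $\Crit$ and Hessian checks that the paper leaves implicit. One caution about your closing aside, which is not part of the lemma: a strict Lyapunov function with finitely many critical points does not by itself keep $\overline B_\varepsilon$ isolating along $\theta'\in[0,\theta]$, because at an exceptional angle a connecting orbit, or a critical point on $\partial B_\varepsilon$, may meet the boundary, and the paper avoids this by rotating at $t=0$, where \Cref{lem:isolation} applies, and obtaining isolation for the morsified flows from \Cref{prop:openness} and compactness of $[0,2\pi]\times\{0\}$.
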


\begin{proof}
$e^{-i\theta}(w_i-w_j)\in(0,\infty)$ forces $w_i\ne w_j$ and
$\theta\equiv\arg(w_i-w_j)\pmod{2\pi}$: one forbidden angle for each
ordered pair.  Pairs with $w_i=w_j$ impose no condition, since the
difference is $0\notin(0,\infty)$.
\end{proof}

\begin{figure}[htb]
\centering
\begin{tikzpicture}[>=Stealth,scale=0.74,transform shape]
\begin{scope}
\draw[->,thick] (-3.3,0) -- (3.6,0) node[below right] {$\operatorname{Re}w$};
\draw[->,thick] (0,-2.4) -- (0,2.6) node[above] {$\operatorname{Im}w$};
\coordinate (a1) at (2.6,1.5);
\coordinate (a2) at (-1.2,1.5);
\coordinate (a3) at (1.8,-1.3);
\coordinate (a4) at (-2.4,-1.3);
\draw[blue!60!black,thick,dashed] (-3.0,1.5) -- (3.2,1.5);
\draw[blue!60!black,thick,dashed] (-3.0,-1.3) -- (3.2,-1.3);
\draw[red!70!black,very thick,-{Stealth[length=3mm]}] (a1) -- (a2);
\draw[red!70!black,very thick,-{Stealth[length=3mm]}] (a3) -- (a4);
\foreach \p/\l/\pos in {a1/w_1/above right,a2/w_2/above left,a3/w_3/below right,a4/w_4/below left}{
  \fill (\p) circle (2.3pt); \node[\pos] at (\p) {$\l$};
}
\node[red!70!black,above] at (0.7,1.55) {\small $w_1-w_2>0$};
\node[red!70!black,below] at (-0.3,-1.35) {\small $w_3-w_4>0$};
\draw[gray,dotted,thick] (a1) -- (a3);
\node[gray] at (2.2,0.1) {\small $\times$};
\node[gray,anchor=west] at (2.45,0.75) {\small $\operatorname{Im}w_1\neq\operatorname{Im}w_3$};
\node[align=center] at (0.2,-3.0) {\small image of an orbit $=$ horizontal segment, leftwards;\\[-1pt]
\small a connection $q_i\to q_j$ needs $w_i-w_j\in(0,\infty)$};
\end{scope}
\begin{scope}[xshift=9.2cm]
\coordinate (b1) at (0.5,1.3);
\coordinate (b2) at (-1.9,0.7);
\coordinate (b3) at (0.9,-0.6);
\coordinate (b4) at (-0.6,-1.5);
\foreach \i/\j in {1/2,1/3,1/4,2/3,2/4,3/4}{
  \draw[gray!70] (b\i) -- (b\j);
}
\foreach \p/\l/\pos in {b1/w_1/above right,b2/w_2/above left,b3/w_3/right,b4/w_4/below left}{
  \fill (\p) circle (2.3pt); \node[\pos] at (\p) {$\l$};
}
\node at (-0.6,-2.6) {\small critical values $w_i$ of $F_0$};
\def\C{(4.7,0)}
\draw[thick] (4.7,0) circle (1.7);
\node at (4.7,-2.0) {\small angles $\theta\in\mathbb R/2\pi$};
\foreach \i/\j in {1/2,1/3,1/4,2/3,2/4,3/4}{
  \pgfmathanglebetweenpoints{\pgfpointanchor{b\j}{center}}{\pgfpointanchor{b\i}{center}}
  \let\ang\pgfmathresult
  \draw[red!70!black,very thick] ($(4.7,0)+(\ang:1.55)$) -- ($(4.7,0)+(\ang:1.85)$);
  \pgfmathsetmacro{\angb}{\ang+180}
  \draw[red!70!black,very thick] ($(4.7,0)+(\angb:1.55)$) -- ($(4.7,0)+(\angb:1.85)$);
}
\draw[green!45!black,very thick] ($(4.7,0)+(175:1.5)$) -- ($(4.7,0)+(175:1.9)$);
\node[green!45!black,left] at ($(4.7,0)+(175:1.95)$) {\small $\theta_*$};
\node[red!70!black,align=left,right] at (6.6,0.6) {\small forbidden:\\[-1pt]\small $\arg(w_i-w_j)$};
\node[align=center] at (4.7,-2.85) {\small $\theta_*$ avoids finitely many angles:\\[-1pt]
\small no $e^{-i\theta_*}(w_i-w_j)$ is $>0$};
\end{scope}
\end{tikzpicture}
\caption{Left: a connection requires $w_i-w_j\in(0,\infty)$ (\Cref{lem:connections}).  Right: forbidden angles $\arg(w_i-w_j)$ and the chosen $\theta_*$ (\Cref{lem:rotation}).}
\label{fig:connections-rotation}
\end{figure}

\subsection{Proof of Theorem A}
\label{sec:proofA}

We continue $f$ to a morsification with $\mu(f)$ nondegenerate
critical points (\Cref{fig:proofA}), rotate the target so that the
first integral $\im f_t$ forbids connecting orbits, and apply the sum
property.

\begin{figure}[htb]
\centering
\begin{tikzpicture}[>=Stealth, scale=0.85]
\draw[thick] (0,0) circle (1.0);
\filldraw[red!70!black] (0,0) circle (2.2pt);
\node at (0,-1.35) {\scriptsize degenerate, $\mu(f)$};
\node at (0,1.35) {$f$};

\draw[->,thick] (1.4,0) -- (2.6,0);
\node[above] at (2.0,0.05) {\scriptsize morsify};

\draw[thick] (4.5,0) circle (1.0);
\foreach \pt in {(4.15,0.35),(4.75,0.25),(4.35,-0.35),(4.85,-0.3)}{
  \filldraw[blue!55!black] \pt circle (1.8pt);
}
\node at (4.5,-1.35) {\scriptsize $\mu(f)$ Morse pts., index $n$};
\node at (4.5,1.35) {$f_t$};

\draw[->,thick] (5.9,0) -- (7.1,0);
\node[above] at (6.5,0.05) {\scriptsize sum property};

\foreach \ang in {20,80,140,200}{
  \draw[thick] ($(9.3,0)+(\ang:0.5)$) circle (0.5);
}
\filldraw (9.3,0) circle (1.5pt);
\node at (9.3,-1.35) {$\bigvee_{\mu(f)} S^{\,n}$};
\end{tikzpicture}
\caption{Continuation to $\mu(f)$ hyperbolic rest points of index $n$ ($\mu(f)=4$ drawn), then the sum property.}
\label{fig:proofA}
\end{figure}

\begin{figure}[htb]
\centering
\begin{tikzpicture}[scale=0.7,transform shape,>=Stealth]
\foreach \x/\lab in {0/{$t=0$: degenerate, $\mu(f)$},4.2/{$t$ small},8.4/{$t=t_*$: $p_i(t_*)\in B_{\varepsilon/2}$}}{
 \begin{scope}[xshift=\x cm]
 \fill[blue!6] (0,0) circle (1.5);
 \draw[very thick] (0,0) circle (1.5);
 \draw[dashed,gray] (0,0) circle (0.75);
 \node at (-1.05,1.15) {\small $N_0$};
 \node at (0,-2.05) {\small \lab};
 \end{scope}}
\fill[red!70!black] (0,0) circle (2.6pt);
\foreach \x/\y in {-0.12/0.1,0.12/0.12,-0.1/-0.12,0.14/-0.1}{\fill[blue!55!black] (4.2+\x,\y) circle (1.8pt);}
\foreach \x/\y/\u/\v in {-0.12/0.1/-0.45/0.35,0.12/0.12/0.4/0.3,-0.1/-0.12/-0.3/-0.4,0.14/-0.1/0.45/-0.28}{
 \draw[->,gray] (8.4+\x,\y) -- (8.4+\u,\v); \fill[blue!55!black] (8.4+\u,\v) circle (1.8pt);}
\end{tikzpicture}
\caption{The critical point $0$ of $f$ splits into $\mu(f)$ Morse points of $f_t$ inside $N_0$.}
\label{fig:morsify-ball}
\end{figure}

Fix $\varepsilon>0$ with
$\Crit(f)\cap\overline B_\varepsilon=\{0\}$, and write
$N_0=\overline B_\varepsilon$.  Let $h_0$ denote a Hermitian metric
on a neighborhood of $N_0$.  An arbitrary Riemannian metric is joined to the standard Hermitian
metric by convex interpolation (Hermitian metrics are needed from this
point on, since the first integral $\im f$ requires $J$-invariance); by \Cref{cor:metric} the Conley index is unchanged, so we work with
the Hermitian metric $h_0$.

\medskip\noindent\textbf{Step 1: continuation to a rotated morsification.}
For each $\theta\in[0,2\pi]$ the germ $e^{-i\theta}f$ has the same
critical set as $f$, so \Cref{lem:isolation} applied to
$e^{-i\theta}f$ shows that $N_0$ is an isolating neighborhood, with
$\Inv=\{0\}$, for the flow of
$X_{\theta,0}=-\nabla^{h_0}\re(e^{-i\theta}f)$.  Let $f_t$ be a
morsification of $f$ (\Cref{def:morsification}) and put
$X_{\theta,t}=-\nabla^{h_0}\re(e^{-i\theta}f_t)$.  The fields $X_{\theta,t}$ depend smoothly on $(\theta,t)$, so by the continuous dependence of solutions of ordinary differential equations on parameters the flows depend continuously on $(\theta,t)$.  By \Cref{prop:openness} the set of parameters $(\theta,t)$ for which $N_0$ is isolating is open.  It contains the compact set $[0,2\pi]\times\{0\}$, hence it contains $[0,2\pi]\times\{|t|\le t_0\}$ for some $t_0>0$.
Shrinking $t_0$, \Cref{thm:morsification-count} gives, for
$0<|t|\le t_0$, exactly $\mu:=\mu(f)$ nondegenerate critical points
$p_1(t),\dots,p_\mu(t)\in B_{\varepsilon/2}$ of $f_t$ in $N_0$.

Since $t_0$ does not depend on $\theta$, we may fix $t_*$ with
$0<|t_*|\le t_0$ and put $w_i=f_{t_*}(p_i(t_*))$.  By
\Cref{lem:rotation} we then fix $\theta_*$ such that no difference
$e^{-i\theta_*}(w_i-w_j)$, $i\ne j$, is a positive real number, and
set
\[
F=e^{-i\theta_*}f_{t_*},\qquad X_F=-\nabla^{h_0}\re F,\qquad
\varphi_F=\text{its flow}.
\]
The parameter path $(s\theta_*,0)$, $s\in[0,1]$, followed by
$(\theta_*,rt_*)$, $r\in[0,1]$, lies in the region where $N_0$ is
isolating (\Cref{fig:param}), so continuation gives
\begin{equation}\label{eq:cont}
h\bigl(\varphi_f,\{0\}\bigr)
\;\simeq\;
h\bigl(\varphi_F,\,S\bigr),
\qquad
S=\Inv\bigl(N_0,\varphi_F\bigr).
\end{equation}

\begin{figure}[htb]
\centering
\begin{tikzpicture}[scale=0.8,transform shape,>=Stealth]

\fill[blue!5] (0,0) rectangle (6.6,3.6);
\draw[thick] (0,0) rectangle (6.6,3.6);
\node[rotate=90] at (-0.95,1.8) {$\theta\in\mathbb R/2\pi$};
\node at (3.3,-0.5) {$|t|$};
\node[below] at (0,0) {\small $0$};
\node[below] at (6.6,0) {\small $t_0$};
\foreach \p/\a/\b in {0.5/0.7/0.9,1.4/0.5/1.1,2.2/0.9/0.6}{
 \draw[red!70!black,thick] (0.7,\p) .. controls (2.5,{\p+\a}) and (4.5,{\p+\b}) .. (6.6,{\p+0.3});
 \draw[red!70!black,thick,dashed] (0,\p-0.1) -- (0.7,\p);
}
\foreach \p/\a/\b in {2.9/0.1/-0.4,3.3/-0.3/0.2}{
 \draw[red!70!black,thick] (0.7,\p) .. controls (2.5,{\p+\a}) and (4.5,{\p+\b}) .. (6.6,{\p-0.1});
}
\def\ts{4.3}
\draw[dotted,thick] (\ts,0)--(\ts,3.6);
\node[below] at (\ts,0) {\small $t_*$};
\fill[green!45!black] (\ts,2.05) circle (2.6pt);
\node[green!45!black,right] at (\ts,2.05) {\small $(t_*,\theta_*)$};
\draw[->,green!45!black,very thick] (0,0.05)--(0,2.0);
\draw[->,green!45!black,very thick] (0,2.05)--(\ts-0.1,2.05);
\fill[green!45!black] (0,2.05) circle (2.2pt);
\node[green!45!black,left] at (-0.1,1.0) {\scriptsize $s\theta_*$};
\node[green!45!black,above] at (2.1,2.05) {\scriptsize $rt_*$};
\node[red!70!black,align=left,right] at (6.6+0.15,1.4) {\small $\theta=\arg(w_i-w_j)$:\\[-1pt]\small connections possible};
\node[align=left,right] at (6.6+0.15,2.7) {\small shaded: $N_0$ isolating\\[-1pt]\small for all $(\theta,t)$};
\end{tikzpicture}
\caption{Parameter plane: $N_0$ is isolating everywhere; the path of Step~1 in green.}
\label{fig:param}
\end{figure}

\medskip\noindent\textbf{Step 2: the invariant set of the morsified
flow.}
The critical points of $F$ in $N_0$ are the $p_i(t_*)$, all in
$B_{\varepsilon/2}$, with values $e^{-i\theta_*}w_i$.  By
\Cref{lem:connections} applied to $F$, every complete orbit of
$\varphi_F$ in $N_0$ is a rest point or a connecting orbit
$p_i\to p_j$, $i\ne j$, with $e^{-i\theta_*}(w_i-w_j)\in(0,\infty)$.
By the choice of $\theta_*$ the latter do not exist, hence
\[
S=\{p_1(t_*)\}\sqcup\dots\sqcup\{p_\mu(t_*)\},
\]
a disjoint union of isolated invariant sets.

\medskip\noindent\textbf{Step 3: the rest points are hyperbolic of
index $n$.}
Each $p_j(t_*)$ is a nondegenerate critical point of the holomorphic
function $F$, hence of $\re F$ with Morse index exactly $n$
(\Cref{lem:index-n}).  The linearization of $X_F$ at $p_j$ is $-A$, where $A$ is the
$h_0$-self-adjoint operator with
$h_0(Au,v)=\operatorname{Hess}_{p_j}(\re F)(u,v)$.  In an
$h_0$-orthonormal basis $A$ is the symmetric matrix of the Hessian,
which has signature $(n,n)$ by \Cref{lem:index-n}.  Hence $A$ has $n$
positive and $n$ negative eigenvalues, so $DX_F=-A$ has no eigenvalue on
the imaginary axis and exactly $n$ eigenvalues with positive real part.
Thus $p_j$ is hyperbolic with unstable dimension $n$.

\medskip\noindent\textbf{Step 4: sum property and conclusion.}
By \Cref{thm:sum,thm:hyperbolic},
\[
h\bigl(\varphi_F,S\bigr)
\;\simeq\;
\bigvee_{j=1}^{\mu}h\bigl(\varphi_F,\{p_j(t_*)\}\bigr)
\;\simeq\;
\bigvee_{\mu}S^{\,n}.
\]
Combining \eqref{eq:cont} and \Cref{thm:wedge},
\[
h\bigl(\varphi_f,\{0\}\bigr)
\;\simeq\;
\bigvee_{\mu(f)}S^{\,n}
\;\simeq\;
\Sigma\Bigl(\bigvee_{\mu(f)}S^{\,n-1}\Bigr)
\;\simeq\;
\Sigma F_f .
\]
Here suspension commutes with finite wedges; for $n=1$,
$F_f$ has $\mu(f)+1$ points and $\Sigma F_f\simeq\bigvee_{\mu(f)}S^1$.
The homological statements follow.  \qed

\begin{remark}
\label{rem:angles}
By \Cref{lem:connections}, a connecting orbit of $\re(e^{-i\theta}f_t)$
between $p_i$ and $p_j$ can occur only for
$\theta\equiv\arg(f_t(p_i)-f_t(p_j))$.  These finitely many
exceptional angles are avoided in the proof.  Since the Conley index is
invariant under continuation, any admissible angle gives the same
index.
\end{remark}

The following proposition is independent of the proof of Theorem~A; it
gives a complementary geometric description of the index of $\{0\}$
through the exit set $V^+$.

\begin{proposition}[Homogeneous leading part: Conley's computation]
\label{prop:homog}
Let $f=f_k+f'$, where $f_k$ is a homogeneous polynomial of degree
$k\ge2$ with an isolated critical point at $0$ and $f'$ is holomorphic
with $f'=O(|z|^{k+1})$.  Let $\varphi_f$ be the negative gradient flow
of $\re f$ for the Euclidean metric, and put
$V^+=\{z\in S^{2n-1}:\re f_k(z)\le0\}\subset S^{2n-1}=\partial B_1$.
Then $V^+\neq\emptyset$, $V^+\neq S^{2n-1}$, and
\[
h(\varphi_f,\{0\})\;\simeq\;\Sigma V^+\;\simeq\;\Sigma F_f .
\]
\end{proposition}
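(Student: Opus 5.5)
We split the statement into three parts: the nonemptiness claims for $V^+$, the identification $h(\varphi_f,\{0\})\simeq\Sigma V^+$ via an explicit index pair, and the identification $\Sigma V^+\simeq\Sigma F_f$ through Theorem~A.

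\emph{Nonemptiness.} Since $f_k$ has an isolated critical point, $f_k\not\equiv0$. Choose $z\in S^{2n-1}$ with $f_k(z)\neq0$. The circle $e^{i\phi}z$ satisfies $f_k(e^{i\phi}z)=e^{ik\phi}f_k(z)$, and its argument runs over all of $S^1$. Hence $\re f_k$ takes both strictly negative and strictly positive values on $S^{2n-1}$. This gives $V^+\neq\emptyset$ and $V^+\neq S^{2n-1}$.

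\emph{Index pair.} First treat $f=f_k$. Write $g=\re f_k$; it is homogeneous of degree $k$. By Euler's formula, the radial derivative along the flow is
\[
\tfrac{d}{dt}\tfrac12|z|^2=-\langle z,\nabla g\rangle=-k\,g(z).
\]
So orbits cross the sphere $\partial B_r$ outward exactly where $g<0$, and are tangent where $g=0$.

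Following Conley \cite[Ch.~IV, \S1.5A]{Conley1978}, we take $N_1=\overline B_r$ and define $N_2$ as the part of $\partial B_r$ where $g\le0$, suitably thickened. Concretely, put
\[
N_2=\{z\in\overline B_r:\ |z|\ge r/2,\ g(z)\le -c|z|^k\}\ \cup\ \{z\in\partial B_r:\ g(z)\le0\}
\]
for small $c>0$, or use the standard modified block of Conley. The aim is positive invariance of $N_2$ and the exit condition. Since $g$ decreases along orbits and $g\le0$ gives outward motion, orbits entering $\{g\le0\}$ near the boundary stay there and leave through $\partial B_r$.

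The main obstacle is the tangency locus $\{g=0\}\cap\partial B_r$, which prevents the naive pair $(\overline B_r,\{g\le0\}\cap\partial B_r)$ from being an index pair. I would handle this with the cone $\{g\le -c|z|^k\}$: along the flow $g$ strictly decreases while $|z|^k$ changes at a controlled rate. By homogeneity, it suffices to check these inequalities on the unit sphere, where compactness gives uniform constants. Then $N_1/N_2\simeq \overline B/(\text{closed neighborhood of }V^+)$. Since $\overline B$ is contractible, this quotient is homotopy equivalent to the cone on $V^+$ modulo $V^+$, namely $\Sigma V^+$. Here one uses that $V^+$ is a compact semialgebraic set, hence a deformation retract of a neighborhood.

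\emph{Perturbation $f'$.} Consider $f_s=f_k+sf'$ for $s\in[0,1]$. On $\overline B_r$ with $r$ small, $\nabla\re f'=O(r^{k})$ while $|\nabla\re f_k|\ge c r^{k-1}$ away from $0$, since $f_k$ has an isolated critical point and is homogeneous. Therefore $\Crit(f_s)\cap\overline B_r=\{0\}$. By \Cref{lem:isolation}, $\overline B_r$ isolates $\{0\}$ for each $s$, and \Cref{thm:continuation} gives $h(\varphi_f,\{0\})\simeq h(\varphi_{f_k},\{0\})\simeq\Sigma V^+$.

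\emph{Comparison with $\Sigma F_f$.} The last equivalence follows from Theorem~A, since $h(\varphi_f,\{0\})\simeq\Sigma F_f$ already. As an independent check, the fibration $f_k/|f_k|$ has fiber $F_f$, and $V^+$ deformation retracts onto $\{\arg f_k=\pi\}\cap S^{2n-1}\cong F_{f_k}$ by moving along the Milnor fibration over the closed half-circle $\re w\le0$. However, the proof only needs Theorem~A.
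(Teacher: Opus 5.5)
Your proof is correct. It shares the paper's skeleton: rotation $z\mapsto e^{i\phi}z$ for the two claims on $V^+$, continuation from $f$ to $f_k$, and Conley's ball block for $f_k$. It departs from the paper in two places.

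\begin{itemize}
\item For the continuation you use the linear homotopy $f_k+sf'$ on a small ball. The estimates $\|\partial f_k\|\ge c|z|^{k-1}$ and $\|\partial f'\|=O(|z|^k)$ keep $0$ as the only critical point throughout. The paper instead rescales, $\lambda^{-k}f(\lambda z)\to f_k$, on the fixed unit ball. Both work; yours is slightly more direct.
\item For $\Sigma V^+\simeq\Sigma F_f$ you invoke Theorem~A. This is not circular, but it makes the proposition a corollary of Theorem~A. The paper proves $V^+\simeq F_f$ directly: $V^+$ is a collar of a closed page of Milnor's open book for $f_k$, the open page is diffeomorphic to $f_k^{-1}(1)$, and $\mu(f)=\mu(f_k)$ by a degree homotopy. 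That makes the proposition an independent confirmation of Theorem~A.
\end{itemize}
Your optional check would need both of the paper's points. First, $V^+$ retracts onto a closed page (page plus binding), not onto the open page $\{\arg f_k=\pi\}\cap S^{2n-1}$. Second, the fiber of $f_k/|f_k|$ is $F_{f_k}$, which is identified with $F_f$ only through $\mu(f)=\mu(f_k)$.

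Your diagnosis in the index-pair step is mistaken, though harmless. Along every orbit, $\frac{d^2}{dt^2}\tfrac12|z|^2=-k\frac{d}{dt}g=k|\nabla g|^2>0$ away from $0$, so every tangency with $\partial B_r$ is external. Orbits through $\{g=0\}\cap\partial B_r$ therefore leave $\overline B_r$ immediately, and $(\overline B_r,\{g\le0\}\cap\partial B_r)$ is already an index pair. This is Conley's statement, quoted in the paper, that the ball is an isolating block with exit set $V^+$.

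Your cone $\{g\le -c|z|^k\}$ is disjoint from the tangency locus, so it is not what makes your pair work. What does is the same forward exit property, which your own remark that $g$ decreases and $g\le0$ forces outward motion already provides. Nor is your $N_2$ a neighborhood of $V^+$ near $\{g=0\}$; what you actually use is that it retracts radially onto $V^+_r$.

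Tangency is a genuine obstacle only for the perturbed flow of $f$. That is why the block is applied to $f_k$ before continuing, which is also what you do.
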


\begin{proof}
\emph{Step 1: reduction to the homogeneous flow.}
For $\lambda\in(0,1]$ put $f_\lambda(z)=\lambda^{-k}f(\lambda z)$ and
$f_0=f_k$.  Writing $f'=\sum_{j>k}f_j$ with $f_j$ homogeneous of degree $j$,
$f_\lambda=f_k+\sum_{j>k}\lambda^{j-k}f_j$, so $(\lambda,z)\mapsto
f_\lambda(z)$ is smooth on $[0,1]\times\overline B_1$ and $\nabla\re
f_\lambda\to\nabla\re f_k$ in $C^0$ (indeed $C^\infty$) as
$\lambda\to0$.  Since $\nabla f_\lambda(z)=\lambda^{1-k}(\nabla
f)(\lambda z)$, the dilation $z\mapsto\lambda z$ carries orbits of
$-\nabla\re f_\lambda$ to orbits of $-\nabla\re f$, with time multiplied
by the constant $\lambda^{2-k}>0$; hence $\overline B_1$ is isolating for
$f_\lambda$ iff $\overline B_\lambda$ is isolating for $f$ (for the
corresponding negative gradient flows), with the
same index of $\{0\}$.  For small $\lambda$ this holds by
\Cref{lem:isolation}.  For $\lambda=0$ it holds because $f_k$ is
holomorphic with $\Crit(f_k)=\{0\}$ and \Cref{lem:isolation} applies to
$f_k$ on $\overline B_1$.  By continuation (\Cref{thm:continuation}) over
$\lambda\in[0,\lambda_0]$, 
$h(\varphi_f,\{0\})\simeq h(\varphi_{f_k},\{0\})$.  Conley's block is applied to $\varphi_{f_k}$ and not to $\varphi_f$,
since on $\{\re f_k=0\}\cap S^{2n-1}$ the radial component of the flow
of $f_k$ vanishes to first order, and the remainder $-\nabla\re f'$ can
change the sign of the radial derivative.

\emph{Step 2: Conley's computation for $f_k$.}
The flow of $f_k$ is $\dot z=\nabla V(z)$ with $V=-\re f_k$, real
homogeneous of degree $k$ with $\Crit(V)=\{0\}$.
By Conley \cite[Ch.~IV, \S1.5A]{Conley1978} (see also Ch.~III,
\S6.2A, where the unit ball $B_1$ is shown to be an isolating block
whose exit set is $\{V\ge0\}\cap\partial B_1=V^+$), the origin is an
isolated invariant set and $h(\varphi_{f_k},\{0\})=\Sigma^1\wedge[V^+/v^+]$ in Conley's
notation, that is, the reduced suspension $\Sigma V^+$ of $V^+$ pointed
at $v^+$, provided $V^+\neq\emptyset$ and
$V^+\neq\partial B_1$.  Both hold because
$\re f_k(e^{i\pi/k}z)=-\re f_k(z)$ and $f_k\not\equiv0$.

\emph{Step 3: identification of $V^+$.}
By Milnor \cite{Milnor1968}, $f_k/|f_k|$
fibers $S^{2n-1}\setminus K$ over $S^1$, $K=f_k^{-1}(0)\cap S^{2n-1}$,
with fiber $\{z\in S^{2n-1}:f_k(z)>0\}\cong f_k^{-1}(1)$, the Milnor
fiber of $f_k$, and this is an open book with binding $K$.  The set
$V^+$ is the binding together with the pages over a closed half circle,
so the Milnor fibration, trivial over the contractible arc, makes $V^+$
a collar thickening of any closed page (a page together with the
binding); hence $V^+$ deformation retracts onto a closed page.  The
open page $\{f_k>0\}\cap S^{2n-1}$ is diffeomorphic to $f_k^{-1}(1)$ via
$z\mapsto f_k(z)^{-1/k}z$, with the positive real $k$th root since
$f_k(z)>0$, by homogeneity, and
the closed page is homotopy equivalent to it (a homotopy equivalence,
not an identification of manifolds, since the closed page has the
binding as boundary).  Finally
$f_k^{-1}(1)\simeq F_f$: by homogeneity and isolatedness there is
$c>0$ with $\|\partial f_k(z)\|\ge c|z|^{k-1}$, while
$\|\partial f'(z)\|=O(|z|^k)$, so $\partial f_k+s\,\partial f'$,
$s\in[0,1]$, has no zeros on a small sphere, the degrees of
$\partial f$ and $\partial f_k$ there agree, and
$\mu(f)=\mu(f_k)$ by
\cite[Ch.~I, \S5.2, Thm.~1]{ArnoldGuseinZadeVarchenko1985}; by
Milnor, $F_f\simeq\bigvee_{\mu(f)}S^{n-1}\simeq
\bigvee_{\mu(f_k)}S^{n-1}\simeq f_k^{-1}(1)$.

\begin{figure}[htb]
\centering
\begin{tikzpicture}[scale=0.8,transform shape,>=Stealth]
\begin{scope}
 \fill[red!12] (0,0) -- (90:1.6) arc (90:270:1.6) -- cycle;
 \foreach \a in {0,30,...,330}{\draw[gray!70] (0,0)--(\a:1.6);}
 \draw[gray!70] (0,0) circle (1.6);
 \draw[red!70!black,line width=1.6pt] (0,0)--(180:1.6);
 \draw[->,red!70!black,thick] (125:1.05) arc (125:172:1.05);
 \draw[->,red!70!black,thick] (235:1.05) arc (235:188:1.05);
 \fill (0,0) circle (2.2pt);
 \node[below right] at (0,0) {$K$};
 \node[red!70!black,left] at (-1.6,0) {\scriptsize page};
 \node[right] at (1.6,0) {\scriptsize $\arg f_k=0$};
 \node[red!70!black] at (-1.35,1.5) {\small $V^+$};
 \node[align=center] at (0,-2.35) {\small normal slice of the open book\\[-1pt]\small $(K,\arg f_k)$};
\end{scope}
\begin{scope}[xshift=7.0cm]
 \fill[blue!6] (0,0) circle (1.6);
 \draw[thick] (0,0) circle (1.6);
 \draw[red!70!black,line width=2pt] (90:1.6) arc (90:270:1.6);
 \foreach \a in {120,150,180,210,240}{\draw[->,red!70!black,thick] (\a:1.6)--(\a:2.05);}
 \foreach \a in {-60,-30,0,30,60}{\draw[->,blue!60!black,thick] (\a:2.05)--(\a:1.6);}
 \fill (0,0) circle (2pt);
 \node at (0.55,-0.3) {\scriptsize $0$};
 \node[red!70!black] at (-2.7,0) {\small $V^+$};
 \node[blue!60!black] at (2.75,0) {\small $\operatorname{Re}f_k>0$};
 \node at (0,-2.35) {\small $B_1$: exit set $V^+=\{\operatorname{Re}f_k\le0\}$};
\end{scope}
\end{tikzpicture}
\caption{Open book of $f_k$ (left) and the block $B_1$ with exit set $V^+$ (right).}
\label{fig:openbook}
\end{figure}

\begin{figure}[htb]
\centering
\begin{tikzpicture}[scale=0.8,transform shape,>=Stealth]
\begin{scope}
 \fill[blue!6] (0,0) circle (1.2);
 \draw[thick] (0,0) circle (1.2);
 \draw[red!70!black,line width=2pt] (150:1.2) arc (150:210:1.2);
 \node[red!70!black] at (-1.65,0) {\small $V^+$};
 \node at (0.1,0) {$B_1$};
 \node at (0,-1.9) {\small $N_1/N_2=B_1/V^+$};
\end{scope}
\node at (2.3,0) {$\simeq$};
\begin{scope}[xshift=4.6cm]
 \fill[blue!6] (-1,-0.9)--(1,-0.9)--(0,1.1)--cycle;
 \draw[thick] (-1,-0.9)--(0,1.1)--(1,-0.9);
 \draw[red!70!black,line width=2pt] (-1,-0.9)--(1,-0.9);
 \fill (0,1.1) circle (2pt);
 \node[red!70!black,below] at (0,-0.95) {\small $V^+$};
 \node at (0,-1.9) {\small $CV^+/V^+$};
\end{scope}
\node at (6.9,0) {$=$};
\begin{scope}[xshift=9.2cm]
 \fill[blue!6] (0,1.1) .. controls (1.3,0.3) and (1.3,-0.5) .. (0,-0.9)
                       .. controls (-1.3,-0.5) and (-1.3,0.3) .. (0,1.1);
 \draw[thick] (0,1.1) .. controls (1.3,0.3) and (1.3,-0.5) .. (0,-0.9)
                       .. controls (-1.3,-0.5) and (-1.3,0.3) .. (0,1.1);
 \fill (0,1.1) circle (2pt);
 \fill[red!70!black] (0,-0.9) circle (2.6pt);
 \node[red!70!black,below] at (0,-0.95) {\small $[V^+]$};
 \node at (0,-1.9) {\small $\Sigma V^+\simeq\Sigma F_f$};
\end{scope}
\end{tikzpicture}
\caption{$B_1/V^+$ is the reduced suspension of $V^+\simeq F_f$.}
\label{fig:reduced-susp}
\end{figure}

\end{proof}

\begin{remark}
\label{rem:conley-homog}
For homogeneous $f_k$ this is a one-step computation in Conley's
framework, with $\mu(f)=(k-1)^n$ spheres.  For a singularity such as $A_k$,
$k\ge2$, the leading homogeneous part has no isolated critical point; the ball is then not an isolating
block, and the morsification argument above provides the computation.
\end{remark}

\begin{proposition}[Reduction to the corank]
\label{prop:splitting}
Let $f:(\C^n,0)\to(\C,0)$ be a holomorphic germ with an isolated critical
point at $0$, and let $m$ be the corank of its complex Hessian at $0$.
Then in suitable holomorphic coordinates $(u,v)\in\C^m\times\C^{n-m}$,
\[
f(u,v)=g(u)+v_1^2+\dots+v_{n-m}^2 ,
\]
where $g:(\C^m,0)\to(\C,0)$ has an isolated critical point at $0$ and
vanishing $2$-jet, and
\[
h(\varphi_f,\{0\})\;\simeq\;\Sigma^{\,n-m}\,h(\varphi_g,\{0\}),
\]
where $\varphi_f,\varphi_g$ are the negative gradient flows of $\re f$ and
$\re g$.  In particular, if $h(\varphi_g,\{0\})\simeq\Sigma V^+$ as in
\Cref{prop:homog}, then $h(\varphi_f,\{0\})\simeq\Sigma^{\,n-m+1}V^+$.
\end{proposition}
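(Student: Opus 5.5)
The first part is the holomorphic splitting lemma, which we quote from Arnold--Gusein-Zade--Varchenko \cite[Ch.~I]{ArnoldGuseinZadeVarchenko1985}. If the complex Hessian of $f$ at $0$ has corank $m$, the parametrized holomorphic Morse lemma gives holomorphic coordinates $(u,v)$ in which $f(u,v)=g(u)+\sum_{j}v_j^2$, with $g$ of vanishing $2$-jet. Because $\partial f=(\partial g,2v)$, we have $\Crit(f)=\Crit(g)\times\{0\}$, so $g$ has an isolated critical point. Comparing Jacobian algebras also gives $\mu(g)=\mu(f)$.

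For the index statement, first pass to the new coordinates. Let $\Phi$ be the holomorphic coordinate change, so $f\circ\Phi^{-1}=g\oplus q$ with $q(v)=\sum v_j^2$. Then $\Phi$ is a diffeomorphism that conjugates $-\nabla^{h}\re f$ to the negative gradient of $\re(g\oplus q)$ for the pushed-forward metric $\Phi_*h$. Conjugation preserves index pairs, so the Conley index of $\{0\}$ is unchanged. By \Cref{cor:metric} we may then replace $\Phi_*h$ by the product of the standard metrics on $\C^m$ and $\C^{n-m}$.

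For this product metric the vector field decouples. It is $\bigl(-\nabla\re g(u),\,-\nabla\re q(v)\bigr)$, so the flow is the product flow $\varphi_g\times\varphi_q$. The Conley index of a product of isolated invariant sets is the smash product of their indices \cite[Ch.~I, \S4.2]{Conley1978}; concretely, if $(N_1,N_2)$ and $(M_1,M_2)$ are index pairs, then $(N_1\times M_1,\,N_1\times M_2\cup N_2\times M_1)$ is an index pair for the product. Hence
\[
h(\varphi_f,\{0\})\simeq h(\varphi_g,\{0\})\wedge h(\varphi_q,\{0\}).
\]
The origin is a nondegenerate critical point of $q$, so by \Cref{lem:index-n} and \Cref{thm:hyperbolic}, $h(\varphi_q,\{0\})\simeq S^{n-m}$. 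Since smashing with $S^{n-m}$ is the $(n-m)$-fold reduced suspension, the displayed formula follows. The last assertion then comes from $\Sigma^{n-m}\Sigma V^+=\Sigma^{n-m+1}V^+$. For $m=0$, where $g$ is the zero function on a point, the statement reduces to \Cref{thm:hyperbolic}.

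The main obstacle is the metric change. The coordinate change $\Phi$ is not an isometry, so the conjugated flow is a gradient flow for a non-product metric. \Cref{cor:metric} handles this because it allows an arbitrary Riemannian metric: \Cref{lem:isolation} applies to $g\oplus q$ and gives a common isolating neighborhood along the convex interpolation of metrics. One detail to check is that the product-index formula is used with a product neighborhood $\overline B\times\overline B$ rather than a round ball. This is harmless, since any compact neighborhood of $0$ inside $\overline B_\varepsilon$ is isolating.
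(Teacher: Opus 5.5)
Your proposal is correct and follows essentially the same route as the paper. Both use the holomorphic splitting lemma, transport the flow by the coordinate change, invoke \Cref{cor:metric} to switch to the product metric, and then apply the product formula for index pairs to get $h(\varphi_f,\{0\})\simeq S^{n-m}\wedge h(\varphi_g,\{0\})=\Sigma^{n-m}h(\varphi_g,\{0\})$. Your extra remarks only make the same argument more explicit: the identity $\Crit(f)=\Crit(g)\times\{0\}$, the case $m=0$, and the point that the product isolating neighborhood may be used in place of the round ball.
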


\begin{proof}
Such coordinates exist by the holomorphic splitting lemma (the
holomorphic counterpart of the Gromoll--Meyer splitting lemma
\cite{GromollMeyer1969}, obtained from the holomorphic implicit function
theorem and the Morse lemma with parameters); the function $g$ has
vanishing $2$-jet because $m$ is the corank of the Hessian, and an
isolated critical point because $f$ has one.  Let $\psi$ be the
biholomorphism to these coordinates.  Transporting the flow by $\psi$
gives the negative gradient flow of $\re(f\circ\psi)$ for the
pulled-back metric, again a Riemannian metric, and conjugate flows have
the same index; by \Cref{cor:metric} the index does not depend on the
metric, so we may use the product metric in the coordinates $(u,v)$.  The
flow is then the product of the flow of $-\nabla\re g$ on $\C^m$ and of
the flow of $-\nabla\re\sum_jv_j^2$ on $\C^{n-m}$.  The origin is a
hyperbolic rest point of the second flow, whose real unstable dimension is
$n-m$: writing $v_j=x_j+iy_j$, $-\nabla\re v_j^2=(-2x_j,\,2y_j)$, so each
$v_j$ contributes one stable and one unstable real direction.  Its
index is therefore $S^{n-m}$ by \Cref{thm:hyperbolic}.  If $(N_1,N_2)$ and
$(M_1,M_2)$ are index pairs for the factors, then
$(N_1\times M_1,\;N_1\times M_2\cup N_2\times M_1)$ is an index pair for
the product flow, with quotient $N_1/N_2\wedge M_1/M_2$
\cite[Ch.~III, \S6.1D]{Conley1978}; hence the index of a product of
isolated invariant sets is the smash product of the indices.  Therefore
$h(\varphi_f,\{0\})\simeq S^{n-m}\wedge h(\varphi_g,\{0\})
=\Sigma^{n-m}h(\varphi_g,\{0\})$.
\end{proof}

\begin{example}[The $A_k$ singularity]
For $f=z_1^{k+1}+z_2^2+\dots+z_n^2$ one has $m=1$ and $g=z^{k+1}$ on
$\C$, whose leading part is homogeneous with an isolated critical point.
Here $V^+=\{\re z^{k+1}\le0\}\cap S^1$ is a disjoint union of $k+1$
closed arcs, hence homotopy equivalent to $k+1$ points, so
$\Sigma V^+\simeq\bigvee_kS^1$ by \Cref{prop:homog}, and
\Cref{prop:splitting} gives
$h(\varphi_f,\{0\})\simeq\Sigma^{n-1}\bigvee_kS^1\simeq\bigvee_kS^n$,
in agreement with \Cref{thm:wedge} and Theorem~A.  In particular
$\CH_j(\varphi_f,\{0\};\Z)\cong\Z^k$ for $j=n$ and $0$ otherwise.
\end{example}

\subsection{Homological consequences and the Poincar\'e--Hopf index}
\label{sec:homology}

\begin{corollary}
\label{cor:CH}
Under the hypotheses of Theorem~A,
\[
\CH_k\bigl(\varphi_f,\{0\};\Z\bigr)
\cong
\wtH_k\bigl(\Sigma F_f;\Z\bigr)
\cong
\wtH_{k-1}\bigl(F_f;\Z\bigr)
=
\begin{cases}
\Z^{\mu(f)}, & k=n,\\
0,& k\neq n,
\end{cases}
\]
and $\mu(f)=\rank\CH_n(\varphi_f,\{0\};\Z)$.
\end{corollary}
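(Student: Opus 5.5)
The corollary follows from Theorem~A by taking reduced singular homology of the pointed homotopy equivalence $h(\varphi_f,\{0\})\simeq\Sigma F_f$. By definition $\CH_k(\varphi_f,\{0\};\Z)=\wtH_k(N_1/N_2;\Z)$ for an index pair $(N_1,N_2)$. Reduced homology is a pointed homotopy invariant, and the class of $N_1/N_2$ is independent of the index pair. Theorem~A then gives the first isomorphism $\CH_k(\varphi_f,\{0\};\Z)\cong\wtH_k(\Sigma F_f;\Z)$.

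The second isomorphism is the suspension isomorphism $\wtH_k(\Sigma X)\cong\wtH_{k-1}(X)$. I would apply it to $X=F_f$, pointed at a vertex of a CW structure; $F_f$ is a smooth manifold, hence triangulable, or one can use its homotopy type $\bigvee_{\mu(f)}S^{n-1}$ from \Cref{thm:wedge}. For well-pointed spaces the reduced suspension agrees up to homotopy with the unreduced one, as noted in \Cref{sec:milnor}. The isomorphism then comes from Mayer--Vietoris for the two contractible cones covering $\Sigma F_f$, whose intersection deformation retracts onto $F_f$.

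Next I would compute $\wtH_{k-1}(F_f;\Z)$ from \Cref{thm:wedge}. For $n\ge2$, $\bigvee_{\mu(f)}S^{n-1}$ has reduced homology $\Z^{\mu(f)}$ in degree $n-1$ and $0$ elsewhere, by additivity of reduced homology over wedges of well-pointed spaces. For $n=1$, $F_f$ consists of $\mu(f)+1$ points, so $\wtH_0(F_f)\cong\Z^{\mu(f)}$ and all other groups vanish. The two cases give the same formula, which is the displayed case distinction with $k=n$. Reading off the rank in degree $n$ gives $\mu(f)=\rank\CH_n(\varphi_f,\{0\};\Z)$.

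No step is a real obstacle. The only point needing care is the pointed setting: Conley's index is a pointed homotopy type, and the suspension must be reduced for the wedge identification. Since all spaces involved are CW complexes, reduced and unreduced constructions agree up to homotopy and homology does not distinguish them. The degenerate case $\mu(f)=0$, where $0$ is a regular point, is excluded by hypothesis. If it were allowed, the formula would still be consistent: all groups vanish and the index is the pointed one-point space.
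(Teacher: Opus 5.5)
Your proposal is correct and follows the paper's own route. The paper gives no separate proof: it simply remarks at the end of the proof of Theorem~A that the homological statements follow from $h(\varphi_f,\{0\})\simeq\Sigma F_f\simeq\bigvee_{\mu(f)}S^n$, which is exactly your argument via reduced homology, the suspension isomorphism and \Cref{thm:wedge}. Your closing aside about $\mu(f)=0$ is moot: in that case $0$ is a regular point, so $\{0\}$ is not even an invariant set and $h(\varphi_f,\{0\})$ is undefined; this case is excluded by the hypotheses anyway.
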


The isomorphism $\CH_n(\varphi_f,\{0\};\Z)\cong\wtH_{n-1}(F_f;\Z)$
depends on the choice of homotopy equivalence $h(\varphi_f,\{0\})\simeq\Sigma F_f$.
It identifies the underlying abelian groups, but not the intersection
form, the Seifert form or the monodromy.

\begin{corollary}[Compatibility with Poincar\'e--Hopf]
\label{cor:PH}
Under the hypotheses of Theorem~A,
\[
\widetilde\chi\bigl(h(\varphi_f,\{0\})\bigr)
=(-1)^n\,\mu(f),
\qquad
\ind_{\mathrm{PH}}\bigl(-\nabla\re f,0\bigr)=(-1)^n\mu(f).
\]
\end{corollary}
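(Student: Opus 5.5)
The plan is to treat the two equalities separately, deriving the first from Theorem~A and Corollary~\ref{cor:CH}, and the second from McCord's theorem.

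For the reduced Euler characteristic, the homotopy type is already determined. By Theorem~A, $h(\varphi_f,\{0\})\simeq\bigvee_{\mu(f)}S^n$. Its reduced homology, recorded in Corollary~\ref{cor:CH}, is free of rank $\mu(f)$ in degree $n$ and zero in all other degrees. Hence
\[
\widetilde\chi\bigl(h(\varphi_f,\{0\})\bigr)=\sum_k(-1)^k\rank\CH_k(\varphi_f,\{0\};\Z)=(-1)^n\mu(f).
\]
This is well defined because $\widetilde\chi$ depends only on the pointed homotopy type. The index pair $(N_1,N_2)$ can be taken to be a compact ENR pair, so the homology is finitely generated.

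For the Poincar\'e--Hopf index, I will invoke McCord's theorem \cite{McCord1989} (see also \cite{RazvanFotouhi2008}). For an isolated invariant set $S$ of a smooth flow on $\R^{m}$ with isolating neighborhood $N$, it states that the fixed point index of the time-$t$ map on $N$, for small $t>0$, equals $\widetilde\chi(h(\varphi,S))$. That fixed point index equals the Poincar\'e--Hopf index of the generating vector field $X$ on $N$, namely the degree of $X/|X|$ on $\partial N$. I apply this with $X=-\nabla\re f$, $N=\overline B_\varepsilon$ and $S=\{0\}$, which is legitimate by Lemma~\ref{lem:isolation}. The result is $\ind_{\mathrm{PH}}(-\nabla\re f,0)=\widetilde\chi(h(\varphi_f,\{0\}))=(-1)^n\mu(f)$.

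The main obstacle is making sure McCord's sign convention matches ours: the relevant quantity should be the index of the field generating the flow, here $-\nabla\re f$, and not of $\nabla\re f$. In dimension $2n$ the two differ by the factor $(-1)^{2n}=1$, so the answer is the same either way. To check the convention, I will verify on a hyperbolic point. Via the morsification, Step~3 of the proof of Theorem~A shows that each Morse point has linearization $-A$, with $n$ positive and $n$ negative eigenvalues. So $\operatorname{sign}\det(-A)=(-1)^n$ and each point has PH index $(-1)^n$.

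This also gives an independent check that avoids McCord. The PH index is invariant under the homotopy $X_{\theta,t}$, which has no zeros on $\partial\overline B_\varepsilon$ by Lemma~\ref{lem:isolation} and Theorem~\ref{thm:morsification-count}. It is additive over the $\mu(f)$ zeros of $X_F$, which gives $(-1)^n\mu(f)$ directly and confirms the sign.
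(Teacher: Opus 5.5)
Your proposal is correct and essentially follows the paper: Theorem~A gives $\widetilde\chi\bigl(h(\varphi_f,\{0\})\bigr)=(-1)^n\mu(f)$, and McCord's theorem identifies this with $\ind_{\mathrm{PH}}(-\nabla\re f,0)$, with the sign fixed by the index-$n$ Morse points of the morsification ($\operatorname{sign}\det(-A)=(-1)^n$) and by $(-1)^{2n}=1$. The differences are minor. You apply McCord at the degenerate point itself, in its fixed-point-index form, rather than to the morsified flow followed by continuation; in that form the time-$t$ map strictly computes the index of $-X$, which is harmless since $2n$ is even. Your homotopy-invariance check along $X_{\theta,t}$ replaces the paper's second computation via $\deg\partial f=\mu(f)$ and the degree $(-1)^n$ of complex conjugation on $S^{2n-1}$.
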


\begin{proof}
The first equality follows from Theorem~A, since
$\widetilde\chi(S^n)=(-1)^n$ and $h(\varphi_f,\{0\})\simeq
\bigvee_{\mu(f)}S^n$.

For the second, we use McCord's theorem \cite{McCord1989}; see also \cite{RazvanFotouhi2008}, where the Euler characteristic of the Conley index is taken as the definition of the Poincar\'e index of an isolated invariant set.  In its
normalization, for a smooth vector field $\mathscr X$ on a manifold of
real dimension $m$, with $S$ an isolated invariant set all of whose
zeros are nondegenerate,
$(-1)^m\chi(M;S)=\sum_{x\in S}\iota_x(\mathscr X)$
\cite[Theorem]{McCord1989}, where $\chi(M;S)$ is the Euler
characteristic of the homological Conley index, equal to our
$\widetilde\chi(h(\varphi,S))$, and $\iota_x(\mathscr X)$ is the
ordinary index $\operatorname{sign}\det(D\mathscr X_x)$; explicitly
$\chi(M;S)=\sum_k(-1)^k\operatorname{rank}\CH_k(\varphi,S)$.  Here
$m=2n$, so $(-1)^m=1$.  Apply this to the flow $\varphi_F$ of
the proof of Theorem~A and to $S$ (Step~2): its zeros are the
$\mu(f)$ nondegenerate points $p_j(t_*)$, and the linearization
$DX_F=-A$ of Step~3 has $n$ positive and $n$ negative eigenvalues
(the Hessian $A$ has signature $(n,n)$ by \Cref{lem:index-n}), so $\iota_{p_j(t_*)}(X_F)=(-1)^n$ for every $j$.
Hence $\chi(M;S)=\mu(f)(-1)^n$.  By continuation
(\Cref{thm:continuation}), $\chi(M;\cdot)$ agrees with its value at
the original degenerate point, so
$\widetilde\chi(h(\varphi_f,\{0\}))=(-1)^n\mu(f)$ equals the
Poincar\'e--Hopf index of $-\nabla\re f$ at $0$.  The same value follows from the local degree: the degree of $\df$ at $0$ equals $\mu(f)$ \cite[Ch.~I, \S5.2, Thm.~1]{ArnoldGuseinZadeVarchenko1985}, complex conjugation acts on $S^{2n-1}\subset\C^n$ with degree $(-1)^n$, and the antipodal sign in real dimension $2n$ is $+1$, so $\ind_{\mathrm{PH}}(-\overline{\df},0)=(-1)^n\mu(f)$.
\end{proof}

\begin{remark}
The index $\ind_{\mathrm{PH}}(-\nabla\re f,0)$ is also computable
algebraically \cite{EisenbudLevine1977,Khimshiashvili1977}; see
\cite{BrasseletSeadeSuwa2009,EbelingGuseinZadeSeade2004} for indices of
vector fields and $1$-forms on singular varieties.
\end{remark}

\section{Examples}
\label{sec:examples}

\begin{example}[The Morse singularity]
For $f=z_1^2+\dots+z_n^2$, the origin is already a nondegenerate
critical point of $f$, so no morsification is needed.  By
\Cref{thm:wedge}, $\mu(f)=1$ and $F_f\simeq S^{n-1}$, so
$\Sigma F_f\simeq S^n$.

On the dynamical side, $\re f=\sum_j(x_j^2-y_j^2)$ and
$-\nabla\re f(z)=(-2x_1,2y_1,\dots,-2x_n,2y_n)$, which under
$\C^n\cong\R^{2n}$ is $-2\overline z$: the linearization at
$0$ has $n$ contracting and $n$ expanding real directions, so the
origin is a hyperbolic equilibrium of unstable dimension $n$, and
\Cref{thm:hyperbolic} gives directly
$h(\varphi_f,\{0\})\simeq S^n$, matching Theorem~A:
\[
h(\varphi_f,\{0\})\;\simeq\;S^n\;\simeq\;\Sigma S^{n-1}\;\simeq\;\Sigma F_f.
\]
\end{example}

\begin{example}[The $A_k$ singularities]
\label{ex:Ak}
For $f=z_1^{k+1}+z_2^2+\dots+z_n^2$, \Cref{thm:wedge} gives
$\mu(f)=k$ and $F_f\simeq\bigvee_k S^{n-1}$, so
$\Sigma F_f\simeq\bigvee_k S^n$.  The mechanism of
Theorem~A produces the same wedge dynamically, as follows.

Take the morsification $f_t=z_1^{k+1}+z_2^2+\dots+z_n^2-tz_1$
(\Cref{def:morsification}), with critical point equations
\[
\frac{\partial f_t}{\partial z_1}=(k+1)z_1^k-t=0,
\qquad
\frac{\partial f_t}{\partial z_j}=2z_j=0,\quad j=2,\dots,n.
\]
So $z_2=\dots=z_n=0$ and $z_1^k=t/(k+1)$; for $t\neq0$ small this
has exactly $k$ distinct roots $a_1(t),\dots,a_k(t)$, giving $k$
critical points $p_j(t)=(a_j(t),0,\dots,0)$.  Each is a nondegenerate
critical point of the holomorphic $f_t$: the diagonal entries of the
holomorphic Hessian are $k(k+1)a_j(t)^{k-1}\neq0$ (since
$a_j(t)\neq0$) and $2\neq0$.  By \Cref{lem:index-n} each $p_j(t)$ is a
nondegenerate critical point of $\re f_t$ of Morse index exactly
$n$; the Cauchy--Riemann structure forces this index.  The critical values are explicit: since
$a_j^{k}=t/(k+1)$,
\[
w_j(t)=f_t(p_j(t))=a_j^{k+1}-ta_j=-\frac{k}{k+1}\,t\,a_j(t),
\]
and with $a_j=\omega^j\bigl(t/(k+1)\bigr)^{1/k}$, $\omega=e^{2\pi i/k}$,
\[
w_i-w_j=-\frac{k}{k+1}(k+1)^{-1/k}\,(\omega^i-\omega^j)\,
t^{1+1/k}\ne0\qquad(i\ne j).
\]
So for any fixed small $t_*\neq0$ all critical values are distinct,
and only finitely many angles are excluded by \Cref{lem:rotation}.  For
$\theta_*$ outside this set and
$F=e^{-i\theta_*}f_{t_*}$, \Cref{lem:connections} excludes connecting
orbits and the sum property gives
\[
h\bigl(\varphi_F,S\bigr)\;\simeq\;\bigvee_{j=1}^k S^n,
\]
and continuation back to $f$ (Steps~1--4) yields
\[
h(\varphi_f,\{0\})\;\simeq\;\bigvee_k S^n\;\simeq\;\Sigma F_f.
\]
\end{example}

\begin{remark}[Simple singularities]
For the simple hypersurface singularities in their suspended normal
forms
\[
\begin{array}{ll}
A_k:\ z_1^{k+1}+z_2^2+\dots+z_n^2, & \mu=k,\\[2pt]
D_k:\ z_1^{2}z_2+z_2^{k-1}+z_3^2+\dots+z_n^2, & \mu=k,\\[2pt]
E_6:\ z_1^{3}+z_2^{4}+z_3^2+\dots+z_n^2, & \mu=6,\\[2pt]
E_7:\ z_1^{3}+z_1z_2^{3}+z_3^2+\dots+z_n^2, & \mu=7,\\[2pt]
E_8:\ z_1^{3}+z_2^{5}+z_3^2+\dots+z_n^2, & \mu=8,
\end{array}
\]
a morsification has exactly $\mu$ nondegenerate critical points, each
of real Morse index $n$ by \Cref{lem:index-n}, and Theorem~A gives
$h(\varphi_f,\{0\})\simeq\bigvee_\mu S^n\simeq\Sigma F_f$.
\end{remark}

\begin{figure}[htb]
\centering
\begin{tikzpicture}[scale=0.72,transform shape,>=Stealth,nd/.style={circle,draw,fill=white,inner sep=1.3pt}]\begin{scope}[xshift=0.0cm]
\node at (0,2.45) {\small $A_4$, $\mu=4$};
\node[nd] (d0) at (-0.63,1.55) {};
\node[nd] (d1) at (-0.21,1.55) {};
\node[nd] (d2) at (0.21,1.55) {};
\node[nd] (d3) at (0.63,1.55) {};
\draw (d0)--(d1);
\draw (d1)--(d2);
\draw (d2)--(d3);
\fill[blue!6] (0,-0.2) circle (1.0); \draw[thick] (0,-0.2) circle (1.0);
\fill[blue!55!black] ($(0,-0.2)+(45:0.55)$) circle (1.8pt);
\fill[blue!55!black] ($(0,-0.2)+(135:0.55)$) circle (1.8pt);
\fill[blue!55!black] ($(0,-0.2)+(225:0.55)$) circle (1.8pt);
\fill[blue!55!black] ($(0,-0.2)+(315:0.55)$) circle (1.8pt);
\node at (0,-1.55) {\small $\bigvee_{4}S^{n}$};
\end{scope}
\begin{scope}[xshift=3.2cm]
\node at (0,2.45) {\small $D_4$, $\mu=4$};
\node[nd] (d0) at (0,1.55) {}; \node[nd] (d1) at (-0.5,1.9) {}; \node[nd] (d2) at (0.5,1.9) {}; \node[nd] (d3) at (0,1.1) {};
\draw (d0)--(d1) (d0)--(d2) (d0)--(d3);
\fill[blue!6] (0,-0.2) circle (1.0); \draw[thick] (0,-0.2) circle (1.0);
\foreach \i in {1,...,4}{\fill[blue!55!black] ($(0,-0.2)+({137.5*\i}:{0.7*sqrt(\i/4)})$) circle (1.8pt);}
\node at (0,-1.55) {\small $\bigvee_{4}S^{n}$};
\end{scope}
\begin{scope}[xshift=6.4cm]
\node at (0,2.45) {\small $E_6$, $\mu=6$};
\node[nd] (d0) at (-0.66,1.75) {};
\node[nd] (d1) at (-0.33,1.75) {};
\node[nd] (d2) at (0.00,1.75) {};
\node[nd] (d3) at (0.33,1.75) {};
\node[nd] (d4) at (0.66,1.75) {};
\draw (d0)--(d1);
\draw (d1)--(d2);
\draw (d2)--(d3);
\draw (d3)--(d4);
\node[nd] (db) at (0.00,1.2) {}; \draw (d2)--(db);
\fill[blue!6] (0,-0.2) circle (1.0); \draw[thick] (0,-0.2) circle (1.0);
\foreach \i in {1,...,6}{\fill[blue!55!black] ($(0,-0.2)+({137.5*\i}:{0.7*sqrt(\i/6)})$) circle (1.8pt);}
\node at (0,-1.55) {\small $\bigvee_{6}S^{n}$};
\end{scope}
\begin{scope}[xshift=9.600000000000001cm]
\node at (0,2.45) {\small $E_7$, $\mu=7$};
\node[nd] (d0) at (-0.83,1.75) {};
\node[nd] (d1) at (-0.50,1.75) {};
\node[nd] (d2) at (-0.17,1.75) {};
\node[nd] (d3) at (0.16,1.75) {};
\node[nd] (d4) at (0.49,1.75) {};
\node[nd] (d5) at (0.83,1.75) {};
\draw (d0)--(d1);
\draw (d1)--(d2);
\draw (d2)--(d3);
\draw (d3)--(d4);
\draw (d4)--(d5);
\node[nd] (db) at (-0.17,1.2) {}; \draw (d2)--(db);
\fill[blue!6] (0,-0.2) circle (1.0); \draw[thick] (0,-0.2) circle (1.0);
\foreach \i in {1,...,7}{\fill[blue!55!black] ($(0,-0.2)+({137.5*\i}:{0.7*sqrt(\i/7)})$) circle (1.8pt);}
\node at (0,-1.55) {\small $\bigvee_{7}S^{n}$};
\end{scope}
\begin{scope}[xshift=12.8cm]
\node at (0,2.45) {\small $E_8$, $\mu=8$};
\node[nd] (d0) at (-0.99,1.75) {};
\node[nd] (d1) at (-0.66,1.75) {};
\node[nd] (d2) at (-0.33,1.75) {};
\node[nd] (d3) at (0.00,1.75) {};
\node[nd] (d4) at (0.33,1.75) {};
\node[nd] (d5) at (0.66,1.75) {};
\node[nd] (d6) at (0.99,1.75) {};
\draw (d0)--(d1);
\draw (d1)--(d2);
\draw (d2)--(d3);
\draw (d3)--(d4);
\draw (d4)--(d5);
\draw (d5)--(d6);
\node[nd] (db) at (-0.33,1.2) {}; \draw (d2)--(db);
\fill[blue!6] (0,-0.2) circle (1.0); \draw[thick] (0,-0.2) circle (1.0);
\foreach \i in {1,...,8}{\fill[blue!55!black] ($(0,-0.2)+({137.5*\i}:{0.7*sqrt(\i/8)})$) circle (1.8pt);}
\node at (0,-1.55) {\small $\bigvee_{8}S^{n}$};
\end{scope}
\node[left,align=right] at (-1.7,1.65) {\scriptsize Dynkin diagram\\[-2pt]\scriptsize (vanishing cycles)};
\node[left,align=right] at (-1.7,-0.2) {\scriptsize $\mu(f)$ Morse\\[-2pt]\scriptsize points of $f_t$};
\node[left] at (-1.7,-1.55) {\scriptsize Conley index};
\end{tikzpicture}
\caption{Simple singularities: the index $\bigvee_\mu S^n$ depends only on $\mu$, not on the Dynkin diagram.}
\label{fig:simple}
\end{figure}

\section{Conclusion}
\label{sec:conclusion}

Theorem~A shows that the Conley index of the negative gradient flow
of $\re f$ at an isolated holomorphic singularity is the suspension of
the Milnor fiber, for every $n\ge1$ and any Riemannian metric.  The
proof proceeds by continuation to a morsification, in which the first
integral $\im f_t$ excludes connecting orbits after a rotation of the
target through an angle avoiding finitely many values.  Its corollaries
recover the Poincar\'e--Hopf index of $-\nabla\re f$ via McCord's
theorem \cite{McCord1989} and identify $\CH_n(\varphi_f,\{0\};\Z)$,
up to choices, with $\wtH_{n-1}(F_f;\Z)$, the group spanned by the
vanishing cycles of a morsification.

\bibliographystyle{alpha}

\end{document}